\mathchardef\mhyphen="2D
\newcommand{\beq}{\begin{eqnarray*}}
\newcommand{\feq}{\end{eqnarray*}}
\newcommand{\beqn}{\begin{eqnarray}}
\newcommand{\feqn}{\end{eqnarray}}
\newcommand{\witi}{\widetilde}
\newcommand{\veps}{\varepsilon}
\newcommand{\ints}{\mathbb{Z}}
\newcommand{\reals}{\mathbb{R}}
\newcommand{\nats}{\mathbb{N}}
\newcommand{\bbN}{\mathbb{N}}
\newtheorem{theorem}{Theorem}
\makeatletter \@addtoreset{theorem}{section}\makeatother
\newtheorem{definition}[theorem]{Definition}
\newtheorem{lemma}[theorem]{Lemma}
\newtheorem*{theorem*}{Theorem}
\newtheorem{proposition}[theorem]{Proposition}
\newtheorem{corollary}[theorem]{Corollary}
\newtheorem{remark}[theorem]{Remark}
\newcommand{\Var}{\mathrm{Var}}
\newcommand{\ra}{\rightarrow}
\title{On the range of the transient frog model on $\ints$}
\author{
Arka Ghosh\footnote{Department of Statistics, Iowa State University, Ames, IA 50011, USA; e-mail: apghosh@iastate.edu}
\and
Steven Noren\footnote{Department of Mathematics, Iowa State University, Ames, IA 50011, USA; e-mail: srnoren@iastate.edu}
\and
Alexander Roitershtein\footnote{Department of Mathematics, Iowa State University, Ames, IA 50011, USA; e-mail: roiterst@iastate.edu} }
\begin{document}
\maketitle
\begin{abstract}
In this paper, we observe the frog model, an infinite system of interacting random walks, on $\ints$ with an asymmetric underlying random walk. Under the assumption of transience with a fixed frog distribution, we construct an explicit formula for the moments of the lower bound of the model's long-run range, as well as their asymptotic limits as the drift of the underlying random walk vanishes. We also provide conditions in which the lower bound can be scaled to converge in probability to the degenerate distribution at $1$ as the drift vanishes.
\end{abstract}
{\em MSC2010: } Primary: 60J10, 60K35; Secondary:  60E05, 33D05.\\
\noindent {\em Keywords:} Frog model, interacting random walks, random walks range, $q$-series.
\section{Introduction}
\label{intro}
Consider the following interacting random walks model on $\ints$: initially at each site $x$ there is a fixed number $\eta_x$ of sleeping particles (``frogs"), and there is a certain number ($\eta_0$) of active frogs at the origin. The active frogs perform in discrete time, simultaneously and independently of each other, a biased (say, to the right) nearest-neighbor random walk on $\ints$. When an active frog visits a site $x$, it activates the $\eta_x$ sleeping frogs at $x$, in which each active frog performs the same underlying random walk starting from its initial location, all random walk transitions being independent of each other. The active frogs continue to visit other sleeping frogs and activate them. This model for an infinite number of interacting random walkers is called the frog model on $\ints$ (with drift).
\par
Following \cite{gantert}, we define the notions of recurrence and transience for the frog model as follows:
\begin{definition}
\label{rt}
\normalfont
A frog model is called \emph{recurrent} if $0$ is visited infinitely often by active frogs w.p.1, and \emph{transient} if $0$ is visited only finitely often w.p.1.
\end{definition}
It is shown in \cite{gantert} that the zero-one dichotomy actually takes place, namely a one-dimensional frog model is either transient or recurrent. Both necessary and sufficient conditions for recurrence of the frog model on $\ints$ based on the configuration of frogs and the drift of the random walk are provided in \cite{gantert}. Recurrence for variants of the frog model on more general graphs have been first explored in \cite{telcs} (for the symmetric random walk on $\ints^d$) and subsequently in \cite{popov02}, \cite{popov01} and \cite{popov03}. Shape theorems for the model in $\ints^d$ have been obtained in \cite{shape,shape1}. For further background on the frog model and its variants, refer to \cite{popov03}. For an account of the most recent activity in the area see \cite{LG,zidi,trees,treesp}. In particular, \cite{zidi} generalizes a recurrence criterion of \cite{gantert} to a model in $\ints^d,$ \cite{trees} and \cite{treesp} provide recurrence and transience criteria for the frog model on trees, and \cite{LG} studies survival of particles in a one-dimensional variation of the model, also partially extending some of the results of \cite{gantert}.
\par
The frog model can be interpreted as an information spreading network \cite{shape1,popov03}. The underlying idea is that an active frog holds some information and shares it with sleeping frogs when they meet, activating the sleeping frogs who then spread the information along their random walk path. A closely related to our model
\emph{particle process on $\ints,$} describing the evolution of a virus in an infinite population (e.\,g., computer network),
has been considered in \cite{virus,virus1} and \cite{LG}. The model is also a discrete-time relative of the one-dimensional \emph{stochastic combustion process} studied in \cite{combustion, combustion1}.

\par
In this article, we will explore the behavior of the frog model, in particular its range, when transience is assumed. We specifically introduce a drift component to the random walk and explore how its magnitude effects the range of visited sites in the model. Each active frog will move one integer to the right with probability $p\in(\frac12,1)$, or one integer to the left with probability $1-p$.
Thus the underlying random walk is transient to the right. We define the drift constant \beqn
\label{rho}
\rho:=\frac{1-p}{p}\in(0,1).
\feqn
The drift term $\rho$ can be seen as a measure of ``transience" of this frog model; small values of $\rho$ indicate more frequent rightward movement by the frogs, whereas values of $\rho$ close to 1 more closely resemble recurrence with a slight rightward drift (see, for instance, formula \eqref{returns} below for a concrete
random walk result). Of particular interest is the collective behavior of the frogs as $\rho\uparrow1$.
\par
By Theorem~2.1 in \cite{gantert}, this frog model is transient when we assume an identical distribution of frogs on the nonnegative sites. It is easy to see that w.p.1 there must be only a finite number of visited sites to the left of the origin. That is, in the language of \cite{LG}, transience implies \emph{local extinction} for our model.
\par
In this article, while assuming $\eta_x=0$ for all $x<0$, we will first explore the single-frog case, i.e., the frog model in which $\eta_x=1$ for all $x\geq0$ (cf. \cite{LG,virus,virus1}). We will provide exact and asymptotic results for the moments of the lower bound of the range, which will be used in convergence theorems. After that, we will move to more general choices for $\eta$ and show that, under certain conditions, the frog model's lower bound will behave asymptotically similar to that of the single-frog case. Finally, we will provide asymptotic bounds for moments of the frog model range when $\eta$ is supported on all of $\ints$.
\par
The rest of the paper is organized as follows. A short Section~\ref{prelim} introduces notations and certain technical tools necessary for our proofs.
The three subsequent sections constitute the main body of the article.
The single-frog case is considered in Section~\ref{single}. A class of more general initial configurations of frogs $\eta$ is discussed in
Section~\ref{mgeneral}. The consideration of configurations supported on $\ints$ is discussed in Section~\ref{zconf}.
\section{Preliminaries}
\label{prelim}
For our calculations of the density and moments of the random variable representing the range of the frog model,
we make use of common notation in analytic number theory and combinatorics:
\begin{definition}
\label{qp}
\normalfont
For all $a,q$ such that $|q|<1$, the \emph{$q$-Pochhammer symbol} is defined by
\[
(a;q)_c:=\prod_{j=0}^{c-1}(1-aq^j)\hspace{0.5cm}\text{for } c\in\nats \cup \{\infty\}.
\]
\end{definition}

The $q$-Pochhammer symbol is one of the key functions in the construction of $q$-analogs in number theory, and is often used in the theory of basic hypergeometric functions and analytic combinatorics \cite{qseries,qseries1,qstuff}. For example, \cite{qstuff} provides the following pair of power series identities:
\begin{eqnarray} \label{powerseries1}
(z;q)_\infty=\sum_{n=0}^\infty \frac{(-1)^n q^{n(n-1)/2}}{(q;q)_n} x^n,
\end{eqnarray}
\begin{eqnarray}
\label{powerseries2}
\frac1{(z;q)_\infty}=\sum_{n=0}^\infty \frac{z^n}{(q;q)_n}.
\end{eqnarray}
The power series above reveal that, for fixed nonzero $q\in(-1,1)$, $g(z):=(z;q)_\infty$ and $h(z):=\frac1{(z;q)_\infty}$ are both analytic functions on $(0,1)$. Hence, for any analytic function $f:(0,\infty)\ra(0,1)$, the compositions $g\circ f$ and $h \circ f$ are both analytic functions, and thus infinitely differentiable.
\par
For the calculation of generating functions, it is also useful to recall the following $q$-Pochhammer equality:
\begin{eqnarray} \label{qpoch}
(q^{x+1};q)_\infty=\frac{(q;q)_\infty}{(q;q)_{x}},\hspace{0.5cm} x\in \nats
\end{eqnarray}
\begin{definition}
\label{gf}
\normalfont
Let $q\in (-1,1)$. The \emph{$q$-gamma function} $\Gamma_q$ is defined as
\[
\Gamma_q (z)=\frac{(q;q)_\infty}{(q^z;q)_\infty}(1-q)^{1-z}
\]
The \emph{$q$-digamma function} $\psi_\rho$ is defined as
\begin{eqnarray}
\label{qpolygamma}
\psi_q(z)=\frac1{\Gamma_q(z)} \frac{\partial \Gamma_q(z)}{\partial z} = -\ln(1-q)+\ln q\sum_{n=0}^\infty \frac{q^{n+z}}{1-q^{n+z}}
\end{eqnarray}
\end{definition}
To facilitate our calculation of the range moments, we need to develop notation for Bell polynomials. The Bell polynomials are defined (see, for instance, \cite{ecomb, cumulants}) as the triangular array of polynomials $B_{m,k}$, $m\geq k$, given by
\begin{eqnarray}
\label{bellpoly}
&&
\nonumber
B_{m,k}(x_1,x_2,\dots,x_{n-k+1})
\\
&&
\qquad
=\sum\frac{m!}{k_1!k_2!\cdots k_{m-k+1}!}\left(\frac{x_1}{1!}\right)^{k_1}\left(\frac{x_2}{2!}\right)^{k_2}\cdots\left(\frac{x_{m-k+1}}{(m-k+1)!}\right)^{k_{m-k+1}},
\end{eqnarray}
where the sum is taken over all sequences $\{k_1,k_2,\dots,k_{n-k+1}\}\subset\nats\cup\{0\}$ satisfying
\[k_1+k_2+\cdots+k_{m-k+1}=k \quad \mbox{\rm and} \quad k_1+2k_2+\cdots+(m-k+1)k_{m-k+1}=m.\]
We define the \emph{$m^\text{th}$ complete Bell polynomial} to be
\[
B_m(x_1,x_2,\dots,x_m)=\sum_{k=1}^m B_{m,k}(x_1,x_2,\dots,x_{m-k+1}).
\]

It will be important for our asymptotic analysis of the range of the model to note that $B_m(x_1,x_2,\dots,x_m)$ has only one term of the form $x_1^j$, namely $x_1^m$, and that the coefficient for this term in $B_m(x_1,x_2,\dots,x_m)$ is 1.
\par
While the Bell polynomials has many intriguing details that can be explored in combinatorial number theory (see, for instance, \cite{ecomb} and references therein) , we will mostly concern ourselves with their presence in the celebrated Fa\`{a} di Bruno's formula \cite{dibruno, dibruno1} for higher derivatives of composite functions:
\begin{eqnarray} \label{bruno}
\left(\frac{d}{dt}\right)^m f\bigl(g(t)\bigr)=\sum^m_{k=1}f^{(k)}\bigl(g(t)\bigr)~B_{m,k}\bigl(g'(t),g''(t),\dots,g^{(m-k+1)}(t)\bigr).
\end{eqnarray}
Throughout this paper, we make use of the common asymptotic notation: $f(x)\sim g(x)$ as $x\ra c$ if and only if $\lim_{x\ra c}\frac{f(x)}{g(x)}=1$.
\section{The single frog per site case}
\label{single}
We will first assume that $\eta_x=1$ for each nonnegative integer $x$ and $\eta_x=0$ elsewhere. Let $\rho\in(0,1)$ and consider the corresponding frog model. Let $W_\rho$ represent the random variable for the negative of the minimum of the visited sites in this model. For convenience, we will construct a family of mutually independent random variables $(X_\rho)_{\rho\in(0,1)}$ that all share the same probability space $(\Omega,\mathcal{F},P)$ such that $X_\rho$ and $W_\rho$ share the same distribution.
\par
The distribution function of $X_\rho$ can be easily found by observing that, by the rightward tendency of the initial active frog, all frogs w.p.1 will eventually be woken. Furthermore, if $(S_n)_{n\geq 0}$ has the distribution of the underlying random walk, then
\beqn
\label{returns}
P(S_n=0~\mbox{for some}~n\geq 1\,|\,S_0=1)=\rho,
\feqn
where $\rho$ is introduced in \eqref{rho}. With this observation, we see that for all $x\geq 0$,
\begin{eqnarray}
P(X_\rho\leq x)=P(X_\rho < x+1)=\prod_{j=1}^\infty (1-\rho^{x+j})=(\rho^{x+1};\rho)_\infty.
\end{eqnarray}
It is a simple exercise to see that $P(X_\rho=0)=\prod_{j=1}^\infty (1-\rho^j)=(\rho;\rho)_\infty$. For all $x>0$, the value of $X_\rho$'s probability density function is
\begin{eqnarray}
\begin{split}P(X_\rho=x)&=P(X_\rho\leq x)-P(X_\rho\leq x-1)=\prod_{j=1}^\infty (1-\rho^{x+j})-\prod_{j=0}^\infty (1-\rho^{x+j})\\&=(1-(1-\rho^x))\prod_{j=1}^\infty (1-\rho^{x+j})=\rho^x(\rho^{x+1};\rho)_\infty.
\end{split}
\end{eqnarray}
With the density known, we would now like to study the behavior of $X_\rho$ for values of $\rho$ close to 1, where the frog model more closely resembles the recurrent case. Objects to observe the concentration of the distribution of $X_\rho$ include the central statistics of the random variable, such as the mode and the expectation. For fixed $\rho$, we define a unique representative of the mode statistic, named $M_\rho$, by
\[
M_\rho:=\min\bigl\{x\geq0 : P(X_\rho=x)\geq P(X_\rho=n)\text{ for all } n\geq0\bigr\}.
\]
While the mode statistic is not usually observed compared to other central statistics of a class of random variables, it is a quick calculation that can often provide insight on asymptotic. Also, as we observe later, the concentration of $X_\rho$ around its mode will be influential to its limiting behavior. With that in mind, we present the following result:

\begin{theorem}
\label{mode}
Let $\rho\in(0,1)$, and let $M_\rho$ be the mode statistic for the distribution of $X_\rho$. Then,
\[
\Bigl\lfloor\dfrac{\ln(1-\rho)-\ln\rho}{\ln\rho}\Bigr\rfloor\leq M_\rho \leq \Bigl\lfloor\dfrac{\ln(1-\rho)-\ln(2-\rho)}{\ln\rho}\Bigr\rfloor,
\]
where $\lfloor a \rfloor$ is the largest integer less than or equal to $a\in\reals$. In particular, $M_\rho \sim \frac{\ln(1-\rho)}{\ln\rho}$ as $\rho\ra 1$.
\end{theorem}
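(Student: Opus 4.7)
The plan is to exploit the unimodality of the sequence $x \mapsto P(X_\rho = x)$, which can be read off directly from the explicit density derived just above the theorem. Using $P(X_\rho = x) = \rho^x(\rho^{x+1};\rho)_\infty$ and the factorisation $(\rho^{x+1};\rho)_\infty = (1-\rho^{x+1})(\rho^{x+2};\rho)_\infty$, the ratio of consecutive probabilities simplifies to
\[
R(x) := \frac{P(X_\rho=x+1)}{P(X_\rho=x)} = \frac{\rho}{1-\rho^{x+1}}, \qquad x \geq 0.
\]
Since $\rho^{x+1}$ is strictly decreasing in $x$, so is $R$, and $R(x) \to \rho < 1$ as $x \to \infty$. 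Hence $P(X_\rho = \cdot)$ is unimodal: non-decreasing while $R(x) \geq 1$ and strictly decreasing thereafter, so $M_\rho$ is essentially the transition index of this monotonicity.

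I would then translate the defining property of $M_\rho$ into real-valued inequalities. By the choice of $M_\rho$ we have $P(X_\rho=M_\rho) \geq P(X_\rho=M_\rho+1)$ and, when $M_\rho \geq 1$, also $P(X_\rho=M_\rho) \geq P(X_\rho=M_\rho-1)$. Substituting $R$ and clearing denominators gives $\rho^{M_\rho+1} \leq 1-\rho$ and $\rho^{M_\rho} \geq 1-\rho$. Taking logarithms and dividing by the negative quantity $\ln\rho$ (which reverses both inequalities) yields
\[
\frac{\ln(1-\rho)-\ln\rho}{\ln\rho} \leq M_\rho \leq \frac{\ln(1-\rho)}{\ln\rho},
\]
where the boundary case $M_\rho = 0$ (which occurs exactly when $\rho \leq 1/2$, equivalently $\ln(1-\rho)/\ln\rho \leq 1$) fits into the sandwich trivially.

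The lower bound in the theorem follows at once from the integrality of $M_\rho$ and $\lfloor y\rfloor \leq y$. For the upper bound I observe that $\ln(2-\rho) > 0$ and $\ln\rho < 0$ imply
\[
\frac{\ln(1-\rho)}{\ln\rho} \leq \frac{\ln(1-\rho) - \ln(2-\rho)}{\ln\rho},
\]
so integrality once more gives the stated bound. Finally, the asymptotic relation $M_\rho \sim \ln(1-\rho)/\ln\rho$ as $\rho \uparrow 1$ follows by sandwiching: both stated bounds differ from $\ln(1-\rho)/\ln\rho$ by an additive $O(1)$ term (the floor operations add at most $1$ more), while $\ln(1-\rho) \to -\infty$ dominates.

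No serious obstacle is expected; the argument is a routine ratio analysis. The only steps requiring care are the direction of inequalities when dividing by $\ln\rho < 0$ and the tie-breaking convention in the definition of $M_\rho$ when $\ln(1-\rho)/\ln\rho$ happens to be an integer, both of which are accommodated by keeping the inequalities non-strict throughout.
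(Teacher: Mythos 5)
Your argument is correct, and it takes a genuinely different route from the paper. The paper treats $x$ as a continuous variable, differentiates $\rho^x(\rho^{x+1};\rho)_\infty$ term by term (invoking uniform convergence to justify this), locates the continuous critical point $m$ via $\sum_{j\ge1}\frac{\rho^{m+j}}{1-\rho^{m+j}}=1$, and sandwiches that series between geometric sums; the bound $\ln(2-\rho)$ in the statement comes precisely from the upper geometric estimate $\sum_j \rho^{m+j}/(1-\rho^{m+1})$. You instead run a discrete ratio test: the telescoping identity $(\rho^{x+1};\rho)_\infty=(1-\rho^{x+1})(\rho^{x+2};\rho)_\infty$ collapses the ratio of consecutive probabilities to the closed form $R(x)=\rho/(1-\rho^{x+1})$, whose strict monotonicity gives unimodality for free and reduces the mode to the two clean inequalities $\rho^{M_\rho+1}\le 1-\rho\le\rho^{M_\rho}$. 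This buys several things: no differentiation under an infinite sum (so no appeal to uniform convergence), no fuzziness about relating the discrete mode to a continuous critical point (the paper's ``within 1 away'' step), and in fact a \emph{sharper} upper bound $M_\rho\le\bigl\lfloor\frac{\ln(1-\rho)}{\ln\rho}\bigr\rfloor$, which implies the stated one since $-\ln(2-\rho)/\ln\rho>0$. Your handling of the inequality reversals under division by $\ln\rho<0$, of the boundary case $M_\rho=0$, and of the final asymptotics is all in order.
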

Note that for all $\rho\in(0,1)$, the difference between the two bounds in the theorem's conclusion always belongs to the open interval $(0,2).$ Hence, even for values of $\rho$ that make the bounds extremely large, the theorem narrows down $M_\rho$ to two possibilities.
\begin{proof}[Proof of Theorem~\ref{mode}]
Considering $x$ to be a continuous variable, we note that
\begin{eqnarray}\label{derivative}
\frac{d}{dx} (\rho^{x+1};\rho)_\infty = \frac{d}{dx} \exp\Bigl(\sum_{j=1}^\infty \ln(1-\rho^{x+j})\Bigr)=\Bigl(\frac{d}{dx}\sum_{j=1}^\infty \ln(1-\rho^{x+j})\Bigr)\,(\rho^{x+1};\rho)_\infty,
\end{eqnarray}
assuming that the derivative of the series exists. By Theorem 7.17 in \cite{rudin}, the existence of the derivative depends on the uniform convergence of the series of derivatives on closed intervals. It is enough to confirm that $\sum_{j=1}^\infty ||\frac{\rho^{x+j}}{1-\rho^{x+j}}||_\infty$ absolutely converges for any closed interval $[a,b]$ we choose, where $||f||_\infty=\sup\{|f(x)|:x\in[a,b]\}$. The series is bounded above by $\frac{\rho^x}{(1-\rho^x)^2}$, so it converges absolutely. Hence, the series in \eqref{derivative} is differentiable, and furthermore, by \cite[Theorem 7.17]{rudin}, the derivative of the series is the series of the derivatives. Thus,
\[
\frac{d}{dx} (\rho^{x+1};\rho)_\infty = -\ln(\rho)(\rho^{x+1};\rho)_\infty \sum_{j=1}^\infty \frac{\rho^{x+j}}{1-\rho^{x+j}}.
\]
Using the above derivative, we can find the derivative of the density function for $X_\rho$:
\[\begin{split}
\frac{d}{dx}\Bigl\{(\rho^{x+1};\rho)_\infty \rho^x\Bigr\}&=\frac{d}{dx}\Bigl\{\prod_{j=1}^{\infty}(1-\rho^{x+j}) \rho^x\Bigr\}=\ln(\rho)\rho^x(\rho^{x+1},\rho)_\infty\Bigl(1-\sum_{j=1}^\infty \frac{\rho^{x+j}}{1-\rho^{x+j}}\Bigr).
\end{split}
\]
For the given $\rho$, $\ln(\rho)\rho^x(\rho^{x+1},\rho)_\infty$ is nonzero for all $x>0$. Therefore, by the $1^{\text{st}}$-derivative test for critical points, $M_\rho$ is an integer within 1 away from the positive value $m$ such that
\[
\sum_{j=1}^\infty \frac{\rho^{m+j}}{1-\rho^{m+j}}=1.
\]
Note that $\rho^{x+j}\leq\frac{\rho^{x+j}}{1-\rho^{x+j}}\leq \frac{\rho^{x+j}}{1-\rho^{x+1}}$ for all $x>0$ and all $j\in\nats$. Thus, using the formula for the geometric sum,
\[
\frac{\rho^{m+1}}{1-\rho}=\sum_{j=1}^\infty \rho^{m+j} \leq \sum_{j=1}^\infty \frac{\rho^{m+j}}{1-\rho^{m+j}}=1 \leq \sum_{j=1}^\infty \frac{\rho^{m+j}}{1-\rho^{m+1}} = \frac1{1-\rho^{m+1}}\frac{\rho^{m+1}}{1-\rho}.
\]
The result follows accounting for the fact that $M_\rho$ is integer-valued.
\end{proof}

We would now like to compare the mode of $X_\rho$ with the moments of the random variable. Calculating the moments directly from definition can be quite a challenge, given the convoluted expression of the density in \eqref{density}. Therefore, we take the more circuitous option of first finding the cumulants of $X_\rho$.
\par
We first define the cumulant generating function of $X_\rho$. With the moment generating function $M_{X_\rho}(t):=E(e^{tX_\rho})$, the cumulant generating function $g_\rho(t)$ of $X_\rho$ is defined as the natural logarithm of $M_{X_\rho}(t)$:
\[
g_\rho(t):=\log\bigl(M_{X_\rho}(t)\bigr)=\log\bigl(E(e^{tX_\rho})\bigr).
\]
We then define the \emph{$m^{\text{th}}$ cumulant} $\kappa_\rho^{(m)}$ of $X_\rho$ to be the $m^{\text{th}}$ derivative of the cumulant generating function evaluated at 0:
\[
\kappa_\rho^{(m)}:=g_\rho^{(m)}(0).
\]
Cumulants are known in probability to be an alternative to moments which are difficult to calculate explicitly. Cumulants can also be used to determine moments through the use of Fa\`{a} di Bruno's formula \eqref{bruno} (see, for instance, \cite{dibruno1, cumulants} and references therein), which is the direction we will take. It turns out that the cumulants of $X_\rho$, though unable to be written down using fundamental functions, can be expressed as straight-forward series representations.
\begin{lemma}\label{cumulants}
For each $\rho\in(0,1)$, the cumulant generating function of $X_\rho$ is
\begin{eqnarray}\label{cgf}
g_\rho(t)=\sum_{k=1}^\infty \ln\Bigl(\frac{1-\rho^k}{1-e^t\rho^k}\Bigr).
\end{eqnarray}
Furthermore, for each $m\in\nats$, the $m^{\text{th}}$-cumulant of $X_\rho$ is
\begin{eqnarray}\label{mcumulants}
\kappa_\rho^{(m)}=\sum_{k=1}^\infty \frac{k^{m-1}\rho^k}{1-\rho^k}.
\end{eqnarray}
\end{lemma}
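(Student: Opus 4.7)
The plan is to first evaluate the moment generating function $M_{X_\rho}(t)=E(e^{tX_\rho})$ in closed form and obtain \eqref{cgf} by taking its logarithm. Using the density $P(X_\rho=x)=\rho^x(\rho^{x+1};\rho)_\infty$ (which covers $x=0$ as well, since $(\rho;\rho)_0=1$) together with \eqref{qpoch}, I would write
\[
M_{X_\rho}(t)=\sum_{x=0}^\infty e^{tx}\rho^x\frac{(\rho;\rho)_\infty}{(\rho;\rho)_x}=(\rho;\rho)_\infty\sum_{x=0}^\infty\frac{(e^t\rho)^x}{(\rho;\rho)_x}.
\]
For $t<-\ln\rho$ we have $|e^t\rho|<1$, so \eqref{powerseries2} with $z=e^t\rho$ and $q=\rho$ evaluates the series as $1/(e^t\rho;\rho)_\infty$. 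Expanding the infinite products in $\ln M_{X_\rho}(t)=\ln(\rho;\rho)_\infty-\ln(e^t\rho;\rho)_\infty$ then gives
\[
g_\rho(t)=\sum_{k=1}^\infty\ln(1-\rho^k)-\sum_{k=1}^\infty\ln(1-e^t\rho^k)=\sum_{k=1}^\infty\ln\Bigl(\frac{1-\rho^k}{1-e^t\rho^k}\Bigr),
\]
which is \eqref{cgf}.

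For the cumulant formula, I would expand each logarithm via $-\ln(1-u)=\sum_{n\geq 1}u^n/n$, valid since $|e^t\rho^k|<1$ for all $k\geq 1$ when $t<-\ln\rho$. Subtracting the two Taylor series produces
\[
g_\rho(t)=\sum_{k=1}^\infty\sum_{n=1}^\infty\frac{(e^{nt}-1)\rho^{nk}}{n}.
\]
The double series converges absolutely and locally uniformly on $(-\infty,-\ln\rho)$, so term-by-term differentiation is legitimate. Because $(d/dt)^m[(e^{nt}-1)/n]=n^{m-1}e^{nt}$ for $m\geq 1$, evaluating at $t=0$ gives
\[
\kappa_\rho^{(m)}=g_\rho^{(m)}(0)=\sum_{k=1}^\infty\sum_{n=1}^\infty n^{m-1}\rho^{nk}.
\]
Interchanging the two sums (justified by non-negativity via Tonelli) and summing the geometric series $\sum_{k\geq 1}\rho^{nk}=\rho^n/(1-\rho^n)$ then yields \eqref{mcumulants}.

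The only delicate points are checking that $M_{X_\rho}$ is finite on an open neighborhood of $0$ and justifying the two interchanges (differentiation versus the double sum, and the rearrangement of summation order). Both are routine: $\rho<1$ ensures $0\in(-\infty,-\ln\rho)$ lies strictly inside the domain of convergence, and the inner sum $\sum_n e^{nt}\rho^{nk}/n=-\ln(1-e^t\rho^k)$ decays like $e^t\rho^k$ in $k$, so the iterated sum is finite with non-negative summands on $t\geq 0$. I do not anticipate any real obstacle beyond this bookkeeping.
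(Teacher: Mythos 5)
Your proposal is correct and follows essentially the same route as the paper: compute $M_{X_\rho}(t)=(\rho;\rho)_\infty/(e^t\rho;\rho)_\infty$ via \eqref{powerseries2}, take logarithms to get \eqref{cgf}, expand each $\ln(1-e^t\rho^k)$ as a power series, differentiate term by term, and swap the order of summation to sum the geometric series in $k$. Your extra bookkeeping (the domain $t<-\ln\rho$ and the Tonelli justification for the interchange) only makes explicit what the paper handles by citing uniform convergence.
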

\begin{proof}
Using \eqref{powerseries2}, we calculate the moment generating function of $X_\rho$:
\begin{eqnarray}
\label{mgf}
M_{X_\rho}(t):=E(e^{tX_\rho})=(\rho;\rho)_\infty \sum_{x=0}^\infty \frac{e^{tx}\rho^x}{(\rho;\rho)_x}=\frac{(\rho;\rho)_\infty}{(e^t\rho;\rho)_\infty}= \prod_{k=1}^\infty \frac{1-\rho^k}{1-e^t\rho^k}.
\end{eqnarray}
Taking the natural logarithm of \eqref{mgf} gives us \eqref{cgf}. To find $\kappa_\rho^{(m)}$, we first find the $m^{\text{th}}$ derivative of $g_\rho(t)$:
\[
\begin{split}
g^{(m)}_\rho(t)&= \sum_{k=1}^\infty \Bigl(\frac{d}{dt}\Bigr)^m \ln\Bigl(\frac{1-\rho^k}{1-e^t\rho^k}\Bigr) = \sum_{k=1}^\infty \Bigl(\frac{d}{dt}\Bigr)^m \Bigl(\ln(1-\rho^k)+\sum_{j=1}^\infty \frac{(e^t\rho^k)^j}{j}\Bigr)\\
&= \sum_{k=1}^\infty \sum_{j=1}^\infty j^{m-1}(e^t\rho^k)^j=\sum_{j=1}^\infty \sum_{k=1}^\infty j^{m-1}(e^t\rho^k)^j=\sum_{j=1}^\infty \frac{j^{m-1}e^{tj}\rho^j}{1-\rho^j}.
\end{split}
\]
Note that moving the derivative inside of the summation is justified by Theorem 7.17 in \cite{rudin}, at least for $t$ in a closed interval $[-\delta,\delta]$ for small enough $\delta>0$. Setting $t=0$ gives us \eqref{mcumulants}.
\end{proof}
Finding the moments of a random variable through its cumulants is a well-known technique, but we show the details here for completeness. Note that $M_{X_\rho}(t)=e^{g_\rho(t)}$ is the exponential function composed with another function. Applying Fa\`{a} di Bruno's formula \eqref{bruno} to $M_{X_\rho}(t)$ shows that
\beq
M^{(m)}_{X_\rho}(t)&=&e^{g_\rho(t)} \sum^m_{k=1} B_{m,k}(g_\rho'(t),g_\rho''(t),\dots,g_\rho^{(m-k+1)}(t))
\\
&=&
M_{X_\rho}(t) B_m(g_\rho'(t),g_\rho''(t),\dots,g_\rho^{(m)}(t)).
\feq
Setting $t=0$, we arrive at the following result.
\begin{theorem}
\label{moments}
Using the notation from Section~\ref{intro} and Lemma \ref{cumulants}, for each $\rho\in(0,1)$ and for all $m\in \nats$,
\begin{eqnarray}\label{mmoments}
E(X_\rho^m)=B_m\left(\kappa_\rho^{(1)},\kappa_\rho^{(2)},\cdots,\kappa_\rho^{(m)}\right).
\end{eqnarray}
\end{theorem}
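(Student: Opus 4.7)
The plan is to exploit the defining identity $M_{X_\rho}(t)=\exp(g_\rho(t))$ together with Fa\`a di Bruno's formula \eqref{bruno} applied to the composition $f \circ g_\rho$ with outer function $f(u)=e^u$. Because every derivative of the exponential is itself, Fa\`a di Bruno collapses nicely: the factor $f^{(k)}(g_\rho(t))$ appearing in \eqref{bruno} equals $M_{X_\rho}(t)$ for every $k$, so it pulls out of the sum over $k$, yielding
\[
M_{X_\rho}^{(m)}(t) = M_{X_\rho}(t)\sum_{k=1}^m B_{m,k}\bigl(g_\rho'(t),g_\rho''(t),\dots,g_\rho^{(m-k+1)}(t)\bigr).
\]
By the definition of the complete Bell polynomial given in Section~\ref{prelim}, this inner sum is precisely $B_m(g_\rho'(t),\dots,g_\rho^{(m)}(t))$.

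Next, I would evaluate at $t=0$. Three standard facts finish the computation: $M_{X_\rho}(0)=1$; $M_{X_\rho}^{(m)}(0)=E(X_\rho^m)$ (since $X_\rho\geq 0$ is integer valued and the series for the MGF can be differentiated termwise on a neighborhood of $0$); and $g_\rho^{(k)}(0)=\kappa_\rho^{(k)}$ by the definition of the cumulants. Combining these with the displayed identity above gives \eqref{mmoments}.

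The only technical point that needs real attention is that the MGF, hence the CGF, is sufficiently smooth in a neighborhood of $t=0$ to make these manipulations legitimate. From the explicit formula \eqref{mgf}, $M_{X_\rho}(t)=(\rho;\rho)_\infty/(e^t\rho;\rho)_\infty$, which is analytic on the half-line $t<-\ln\rho$; since $\rho\in(0,1)$, this half-line contains an open neighborhood of $0$. The termwise differentiation of the series defining $g_\rho^{(m)}(t)$ was already justified inside the proof of Lemma~\ref{cumulants} via \cite[Theorem 7.17]{rudin}, so no new analytic work is needed. The main obstacle is really just bookkeeping, namely recognizing that the particular structure of the exponential as outer function is what turns the two-indexed sum in Fa\`a di Bruno into a single complete Bell polynomial evaluated at the cumulants; once this is observed, Theorem~\ref{moments} follows immediately by setting $t=0$.
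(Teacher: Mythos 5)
Your proposal is correct and follows the same route as the paper: apply Fa\`a di Bruno's formula \eqref{bruno} to $M_{X_\rho}(t)=e^{g_\rho(t)}$, use the fact that every derivative of the exponential equals itself to collapse the sum into the complete Bell polynomial, and set $t=0$. The extra remark on analyticity of $M_{X_\rho}$ near $t=0$ is a welcome (if routine) addition that the paper leaves implicit.
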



The exact calculation of the moments in Theorem~\ref{moments} can be quite unwieldy for large values of $m$. Thus, it's often more insightful to observe simpler asymptotic formulas for the moments instead. In \cite{pippenger}, the following asymptotics as $\rho\uparrow1$ were proven for the input series of the Bell polynomials in \ref{moments}:
\begin{eqnarray}\label{asymp1}
\sum_{k=1}^\infty \frac{\rho^k}{1-\rho^k} \sim \frac1{1-\rho}\ln\frac1{1-\rho} \sim \frac{\ln(1-\rho)}{\ln\rho}
\end{eqnarray}
and, for $j\geq 1$,
\begin{eqnarray}\label{asymp2}
\sum_{k=1}^\infty \frac{k^j\rho^k}{1-\rho^k} \sim \frac{j!\zeta(j+1)}{(1-\rho)^{j+1}} \sim \frac{j!\zeta(j+1)}{-\ln^{j+1}\rho},
\end{eqnarray}
where $\zeta(j)=\sum_{k=1}^\infty 1/k^j$ is the Riemann zeta function.
\par
As noted in the previous discussion on Bell polynomials, for each fixed $m\in\nats$, the polynomial $B_m(x_1,\dots,x_m)$ includes only one term of the form $x_1^m$, and this term has coefficient 1. Recalling the construction of the Bell polynomials \eqref{bellpoly} and the pigeonhole principle, it is clear all other terms of $B_m(x_1,\dots,x_m)$ is a multiple of some variable other than $x_1$.
\par
Note that from \eqref{asymp1}, $\bigl(\frac{\ln\rho}{\ln(1-\rho)}\bigr)^m \bigl(\kappa_\rho^{(1)}\bigr)^m\ra 1$ as $\rho\uparrow1$.
Also, it follows from \eqref{asymp2}, that for all $j\in\{2,3,4,\dots,m\}$,
\[
\Bigl(\frac{\ln\rho}{\ln(1-\rho)}\Bigr)^{j} \kappa_\rho^{(j)} \sim \frac{(j-1)!\zeta(j)}{-\ln^{j}(1-\rho)}\ra 0
\]
as $\rho\uparrow1$. Since every term in \eqref{mmoments} is a multiple of some cumulant other than $\kappa_\rho^{(1)}$, except for the $(\kappa_\rho^{(1)})^m$ term, multiplying $E(X_\rho^m)$ by $\bigl(\frac{\ln\rho}{\ln(1-\rho)}\bigr)^m$ and taking the limit as $\rho\uparrow1$ eliminates all terms except for the afore mentioned term, where the limit converges to 1. Therefore, we can obtain the following asymptotic result:

\begin{corollary}\label{amoments}
For all $m\in\nats$, $E(X_\rho^m)\sim \bigl(\frac{\ln (1-\rho)}{\ln\rho}\bigr)^m$ as $\rho\uparrow1$.
\end{corollary}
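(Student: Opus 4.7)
The strategy is to apply Theorem~\ref{moments} to express $E(X_\rho^m)$ as a complete Bell polynomial in the cumulants $\kappa_\rho^{(j)}$, and then use the Pippenger asymptotics \eqref{asymp1}--\eqref{asymp2} to isolate the single dominant monomial. The isolation relies on the structural observation (already recorded in the discussion after \eqref{bellpoly}) that $(\kappa_\rho^{(1)})^m$ is the unique summand of $B_m(\kappa_\rho^{(1)},\dots,\kappa_\rho^{(m)})$ in which no higher cumulant appears, and that it has coefficient $1$.

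First, by Theorem~\ref{moments} I would write
\[
E(X_\rho^m) = B_m\bigl(\kappa_\rho^{(1)},\dots,\kappa_\rho^{(m)}\bigr) = \bigl(\kappa_\rho^{(1)}\bigr)^m + \sum_{\vec k \in \sX_m} c_{\vec k}\prod_{i=1}^m \bigl(\kappa_\rho^{(i)}\bigr)^{k_i},
\]
where $\sX_m$ is the finite set of tuples $\vec k = (k_1,\dots,k_m)$ of nonnegative integers satisfying $\sum_{i=1}^m i\,k_i = m$ and $k_j \ge 1$ for at least one $j \ge 2$, and the $c_{\vec k}$ are the explicit nonnegative coefficients read off from \eqref{bellpoly}.

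Second, I would scale both sides by $\bigl(\ln\rho / \ln(1-\rho)\bigr)^m$. The diagonal monomial satisfies $\bigl(\ln\rho/\ln(1-\rho)\bigr)^m (\kappa_\rho^{(1)})^m \ra 1$ directly from \eqref{asymp1}. For any $\vec k \in \sX_m$, applying \eqref{asymp1} to the $k_1$ copies of $\kappa_\rho^{(1)}$ and \eqref{asymp2} to each $\kappa_\rho^{(i)}$ with $i \ge 2$, together with the constraint $m - k_1 = \sum_{i\ge 2} i\,k_i$, gives
\[
\Bigl(\frac{\ln\rho}{\ln(1-\rho)}\Bigr)^m \prod_{i=1}^m \bigl(\kappa_\rho^{(i)}\bigr)^{k_i} \sim \frac{(-1)^{m-k_1}\,C_{\vec k}}{(\ln(1-\rho))^{m-k_1}} \ra 0 \qquad \text{as } \rho \uparrow 1,
\]
for an explicit positive constant $C_{\vec k}$ depending only on $\vec k$; here $m - k_1 \ge 2$ because $\vec k\in \sX_m$ forces $k_j \ge 1$ for some $j \ge 2$, and $|\ln(1-\rho)|\ra\infty$. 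Summing over the finite collection $\sX_m$ and adding the leading constant $1$ yields $\bigl(\ln\rho/\ln(1-\rho)\bigr)^m E(X_\rho^m) \ra 1$, as claimed.

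The proof poses no real obstacle beyond bookkeeping: one must track the sign cancellation $(\ln\rho)^{m-k_1}/(-\ln\rho)^{m-k_1} = (-1)^{m-k_1}$ when combining the two asymptotics, and use the finiteness of the Bell-polynomial expansion to pass to the limit monomial by monomial. Once these are in place, the corollary reduces to Theorem~\ref{moments} together with the quoted asymptotics \eqref{asymp1}--\eqref{asymp2}.
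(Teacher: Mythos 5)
Your proposal is correct and follows essentially the same route as the paper: expand $E(X_\rho^m)$ via Theorem~\ref{moments} as a complete Bell polynomial, note that $(\kappa_\rho^{(1)})^m$ is the unique monomial free of higher cumulants and has coefficient $1$, and use \eqref{asymp1}--\eqref{asymp2} to show every other monomial vanishes after scaling by $\bigl(\ln\rho/\ln(1-\rho)\bigr)^m$. Your version merely makes the bookkeeping (the index set $\sX_m$, the constraint $m-k_1=\sum_{i\ge 2}ik_i$, and the sign tracking) more explicit than the paper's prose argument.
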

To further clarify the special role of the number $Z_\rho=\frac{\ln (1-\rho)}{\ln\rho}$ in our model consider
\begin{eqnarray*}
U_\rho:=\mbox{number of frogs who reached the site $-Z_\rho$},
\end{eqnarray*}
where, for simplicity and clarity of the subsequent computation, we treat $Z_\rho$ as an integer number. Then, using \eqref{returns} and the Markov property,
observe that
\begin{eqnarray*}
E(U_\rho)=\sum_{x=0}^\infty P(S_n=-Z_\rho~\mbox{for some}~n\geq 1\,|\,S_0=x)=\sum_{x=0}^\infty \rho^{x+Z_\rho}=\frac{\rho^{Z_\rho}}{1-\rho}=1.
\end{eqnarray*}
This result can be heuristically interpreted as an illustration of the fact that $-Z_\rho$ serves as the most distant place, though barely,
is still accessible to the frog population (in average only one frog can reach that far).
\par
The next corollary is merely a couple of special cases of Theorem~\ref{moments}, but will be useful in the discussion of the asymptotics of $X_\rho$.

\begin{corollary}
\label{stats} For any $\rho\in(0,1)$, the expected value of $X_\rho$ is given by:
\begin{eqnarray}\label{expectation}
E(X_\rho)=\sum^\infty_{x=1} \frac{\rho^x}{1-\rho^x}=\frac{\psi_\rho(1)+\ln(1-\rho)}{\ln\rho},
\end{eqnarray}
where $\psi_\rho$ is the $q$-digamma function defined by \eqref{qpolygamma}. Also, the variance of $X_\rho$ is:
\begin{eqnarray}\label{variance}
\Var(X_\rho)=\sum_{x=1}^\infty \frac{x\rho^x}{1-\rho^x}=\frac{\psi'_\rho(1)}{\ln^2 \rho}.
\end{eqnarray}
\end{corollary}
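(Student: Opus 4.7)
The plan is to deduce both formulas from the already established Lemma~\ref{cumulants} and Theorem~\ref{moments}, using the low-order Bell polynomials $B_1(x_1)=x_1$ and $B_2(x_1,x_2)=x_1^2+x_2$, and then to identify the resulting cumulant series with the values of $\psi_\rho$ and $\psi'_\rho$ at $1$.

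First I would observe that Theorem~\ref{moments} with $m=1$ gives $E(X_\rho)=\kappa_\rho^{(1)}$ immediately, so the series representation in \eqref{expectation} follows directly from \eqref{mcumulants} applied at $m=1$. For the variance, Theorem~\ref{moments} with $m=2$ yields $E(X_\rho^2)=\bigl(\kappa_\rho^{(1)}\bigr)^2+\kappa_\rho^{(2)}$, so
\[
\Var(X_\rho)=E(X_\rho^2)-\bigl(E(X_\rho)\bigr)^2=\kappa_\rho^{(2)}=\sum_{k=1}^\infty\frac{k\rho^k}{1-\rho^k},
\]
which is the series on the right of \eqref{variance}.

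Next, I would translate these series into the $q$-digamma expressions. Substituting $z=1$ into the defining series \eqref{qpolygamma} of $\psi_q$ immediately gives $\psi_\rho(1)=-\ln(1-\rho)+\ln\rho\sum_{k=1}^\infty\frac{\rho^k}{1-\rho^k}$, and solving for the sum produces the right-hand expression in \eqref{expectation}. For \eqref{variance}, I would differentiate \eqref{qpolygamma} with respect to $z$ term by term, using the quotient rule on $q^{n+z}/(1-q^{n+z})$ to obtain
\[
\psi'_q(z)=\ln^2 q\sum_{n=0}^\infty\frac{q^{n+z}}{(1-q^{n+z})^2},
\]
and then evaluate at $z=1$ with $q=\rho$. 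The differentiation under the summation sign is routine to justify exactly as in the proof of Theorem~\ref{mode}: on any closed subinterval of $(0,\infty)$ the differentiated series is uniformly bounded by a convergent numerical series, so \cite[Theorem 7.17]{rudin} applies. To reconcile $\psi'_\rho(1)/\ln^2\rho=\sum_{k=1}^\infty \rho^k/(1-\rho^k)^2$ with the desired expression $\sum_{x=1}^\infty x\rho^x/(1-\rho^x)$, I would expand $(1-\rho^k)^{-2}=\sum_{j=0}^\infty (j+1)\rho^{jk}$ and swap the order of summation by Tonelli's theorem (all terms positive), which rewrites the double sum as $\sum_{j=1}^\infty j\sum_{k=1}^\infty \rho^{jk}=\sum_{j=1}^\infty j\rho^j/(1-\rho^j)$.

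The only mildly delicate point is the termwise differentiation of \eqref{qpolygamma}, but since the paper has already performed analogous manipulations (notably inside Lemma~\ref{cumulants} and the proof of Theorem~\ref{mode}), this step is entirely routine here. Everything else is bookkeeping with the Bell polynomial identities $B_1$ and $B_2$ and the rearrangement of non-negative double series, so the corollary follows without further difficulty.
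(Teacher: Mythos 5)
Your proposal is correct and follows exactly the route the paper intends: the paper states this corollary as "merely a couple of special cases of Theorem~\ref{moments}" and gives no further details, while you supply them — the Bell polynomial evaluations $B_1$, $B_2$ giving $E(X_\rho)=\kappa_\rho^{(1)}$ and $\Var(X_\rho)=\kappa_\rho^{(2)}$, and the Lambert-series rearrangement $\sum_{k\ge1}\rho^k(1-\rho^k)^{-2}=\sum_{k\ge1}k\rho^k(1-\rho^k)^{-1}$ identifying the variance with $\psi'_\rho(1)/\ln^2\rho$. All steps check out.
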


We now seek to determine the asymptotic of the distribution of $X_\rho$ when $\rho\uparrow 1.$ Theorem~\ref{mode} and the result in \eqref{expectation} both hint that $X_\rho$ grows at roughly the same rate as $\frac{\ln(1-\rho)}{\ln\rho}$ as $\rho$ rises close to 1. The nature of this growth can be revealed by the asymptotic of the variance, which grows sufficiently slow to guarantee scaling limits for $X_\rho$.

\begin{theorem}
\label{pconvergence}
Let $Y_\rho$ share the same distribution as $\frac{\ln \rho}{\ln(1-\rho)}X_\rho$ for each $\rho\in (0,1)$. Then, as $\rho\uparrow1$, $Y_\rho \ra 1$ in probability. That is, for all $\epsilon > 0$, $$\lim_{\rho\ra 1^-}P(|Y_\rho-1|>\epsilon)=0.$$
\end{theorem}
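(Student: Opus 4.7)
The plan is to prove convergence in probability through a standard Chebyshev argument, using the mean and variance asymptotics that were already established for $X_\rho$. Let $a_\rho := \frac{\ln(1-\rho)}{\ln\rho}$, so $Y_\rho = X_\rho/a_\rho$. By Corollary~\ref{amoments} with $m=1$, $E(Y_\rho) = E(X_\rho)/a_\rho \to 1$ as $\rho\uparrow 1$. Hence it suffices to show that $\Var(Y_\rho) = \Var(X_\rho)/a_\rho^2 \to 0$, because then, for any fixed $\veps>0$ and $\rho$ close enough to $1$ so that $|E(Y_\rho)-1|<\veps/2$,
\[
P(|Y_\rho -1|>\veps)\leq P\bigl(|Y_\rho - E(Y_\rho)|>\veps/2\bigr) \leq \frac{4\Var(Y_\rho)}{\veps^2} \;\longrightarrow\; 0.
\]

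The variance estimate is where the two asymptotic formulas \eqref{asymp1} and \eqref{asymp2} do all the work. From Corollary~\ref{stats}, $\Var(X_\rho)=\kappa_\rho^{(2)}=\sum_{x\geq 1} \frac{x\rho^x}{1-\rho^x}$, which is exactly the case $j=1$ of \eqref{asymp2}; hence $\Var(X_\rho)\sim \zeta(2)/(-\ln\rho)^2$ as $\rho\uparrow 1$. Combining with $a_\rho^2 \sim \ln^2(1-\rho)/\ln^2\rho$, one obtains
\[
\Var(Y_\rho) \;=\; \frac{\Var(X_\rho)}{a_\rho^2} \;\sim\; \frac{\zeta(2)}{\ln^2(1-\rho)} \;\longrightarrow\; 0,
\]
since $\ln(1-\rho)\to -\infty$ as $\rho\uparrow 1$. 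The convergence of the mean comes for free from Corollary~\ref{amoments}, which was already deduced from the same asymptotic ingredients.

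There is essentially no hard step here: the nontrivial input, namely the sharp asymptotics of $\kappa_\rho^{(1)}$ and $\kappa_\rho^{(2)}$, has already been used to prove Corollary~\ref{amoments}. The only point that requires a line of care is that Chebyshev is applied with $E(Y_\rho)$ rather than the limiting value $1$; this is handled by absorbing the deterministic error $|E(Y_\rho)-1|$ into $\veps/2$ for $\rho$ sufficiently close to $1$. Conceptually, the proof simply records that the standard deviation of $X_\rho$, of order $1/|\ln\rho|$, is of strictly smaller order than its mean $a_\rho$, whose rescaling defines $Y_\rho$.
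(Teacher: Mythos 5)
Your proof is correct and follows essentially the same route as the paper: both establish $E(Y_\rho)\to 1$ from \eqref{expectation} and \eqref{asymp1}, show $\Var(Y_\rho)\sim \frac{\pi^2}{6}\ln^{-2}(1-\rho)\to 0$ from \eqref{variance} and \eqref{asymp2}, and conclude by Chebyshev. Your write-up merely makes explicit the Chebyshev step (absorbing $|E(Y_\rho)-1|$ into $\veps/2$) that the paper leaves implicit.
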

\begin{proof}
By \eqref{expectation} and \eqref{asymp1}, $E(Y_\rho)=\frac{\ln\rho}{\ln(1-\rho)}E(X_\rho)\ra1$. To show the convergence in probability, it is enough to prove that $\Var(Y_\rho)\ra 0$ as $\rho\uparrow1$. By \eqref{variance} and \eqref{asymp2}, $\Var(X_\rho)\sim \frac{1!\zeta(2)}{(1-\rho)^2} \sim\frac{\pi^2}6\frac1{\ln^2\rho}$. Thus, $\Var(Y_\rho)=\frac{\ln^2\rho}{\ln^2(1-\rho)}\Var(X_\rho)\sim\frac{\pi^2}6\frac1{\ln^2(1-\rho)}\ra 0$. Since $E(Y_\rho)\ra1$ and $\Var(Y_\rho)\ra0$, convergence in probability follows.
\end{proof}
Aside from the probabilistic implications of Theorem \ref{pconvergence}, we can also use the generating functions of $Y_\rho$ to construct some limit identities involving the $q$-Pochhammer symbol. Since $Y_\rho\ra 1$ in probability (and thus in distribution), the moment generating function of $Y_\rho$ converges pointwise to that of the degenerate variable at 1. Similarly, the characteristic function also converges pointwise to the characteristic function of the same degenerate variable. This observation leads to the following corollary:
\begin{corollary}
For any $z>0$, $$\lim_{\rho\uparrow1}\dfrac{(\rho;\rho)_\infty}{\bigl(z^{\frac{\ln(\rho)}{\ln(1-\rho)}}\rho;\rho\bigr)_\infty} = z.$$
Also, for any $t\in\reals$, $$\lim_{\rho\uparrow1}\dfrac{(\rho;\rho)_\infty}{\bigl(e^{it\frac{\ln(\rho)}{\ln(1-\rho)}}\rho;\rho\bigr)_\infty} = e^{it}.$$
\end{corollary}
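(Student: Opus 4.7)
The essential observation is that the two ratios in the statement coincide with the moment generating function and the characteristic function of the scaled variable $Y_\rho=\frac{\ln\rho}{\ln(1-\rho)}X_\rho$ introduced in Theorem~\ref{pconvergence}. Indeed, from the MGF formula \eqref{mgf} one has
\[
E(e^{sY_\rho})=M_{X_\rho}\!\left(\frac{s\ln\rho}{\ln(1-\rho)}\right)=\frac{(\rho;\rho)_\infty}{\bigl(e^{s\ln\rho/\ln(1-\rho)}\rho;\rho\bigr)_\infty},
\]
so setting $s=\ln z$ (with $z>0$) or $s=it$ rewrites the two claims as $E(z^{Y_\rho})\to z$ and $E(e^{itY_\rho})\to e^{it}$, respectively.

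The second identity is then immediate: Theorem~\ref{pconvergence} gives $Y_\rho\to 1$ in probability and hence in distribution, so the easy direction of the L\'evy continuity theorem yields pointwise convergence of characteristic functions to that of the degenerate limit at~$1$.

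For the first identity I would split on the size of $z$. For $z\in(0,1]$ the map $y\mapsto z^y$ is bounded and continuous on $[0,\infty)$ and $Y_\rho\ge 0$, so the portmanteau theorem gives $E(z^{Y_\rho})\to z$. For $z>1$ I would argue directly through the cumulant expansion of Lemma~\ref{cumulants}. Setting $u_\rho:=(\ln z)\ln\rho/\ln(1-\rho)$, one has $u_\rho\to 0$, and
\[
g_\rho(u_\rho)=\sum_{m=1}^\infty\frac{\kappa_\rho^{(m)}}{m!}(\ln z)^m\!\left(\frac{\ln\rho}{\ln(1-\rho)}\right)^{\!m}.
\]
The $m=1$ contribution equals $(\ln z)E(Y_\rho)\to\ln z$ by the proof of Theorem~\ref{pconvergence}, while for $m\ge 2$ the asymptotic \eqref{asymp2} gives
\[
\frac{\kappa_\rho^{(m)}}{m!}\!\left(\frac{\ln\rho}{\ln(1-\rho)}\right)^{\!m}\sim\frac{\zeta(m)}{m\,|\ln(1-\rho)|^m}\longrightarrow 0,
\]
so every term of index $m\ge 2$ vanishes in the limit. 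Consequently $g_\rho(u_\rho)\to\ln z$, and exponentiating yields $E(z^{Y_\rho})\to z$.

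The main technical hurdle will be justifying the term-by-term passage to the limit in the series for $g_\rho(u_\rho)$ when $z>1$. I would handle this by using the crude bound $1/(1-\rho^k)\le 1/(1-\rho)$ in \eqref{mcumulants} to control the growth of $\kappa_\rho^{(m)}$ and then to check that, for $\rho$ sufficiently close to $1$, $u_\rho$ lies well inside the disk of analyticity of $g_\rho$ together with a geometrically summable majorant independent of $\rho$. An alternative route avoiding the cumulant expansion altogether is to establish uniform integrability of $z^{Y_\rho}$ directly from the MGF formula by verifying that $E(z^{(1+\delta)Y_\rho})$ stays bounded as $\rho\uparrow 1$ for some $\delta>0$, and then to combine this with the distributional convergence $Y_\rho\to 1$.
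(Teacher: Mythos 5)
Your identification of the two ratios as $E(z^{Y_\rho})$ and $E(e^{itY_\rho})$ via \eqref{mgf} is exactly the paper's route: the paper deduces the corollary in one sentence from Theorem~\ref{pconvergence}, asserting that convergence in probability to the constant $1$ forces pointwise convergence of both generating functions. For the characteristic function, and for $z\in(0,1]$ where $y\mapsto z^y$ is bounded and continuous on $[0,\infty)$, your argument coincides with the paper's and is complete. Where you genuinely go beyond the paper is in flagging that for $z>1$ the map $y\mapsto z^y$ is unbounded, so convergence in distribution alone does not give $E(z^{Y_\rho})\to z$; the paper passes over this point silently, so your extra care is warranted rather than redundant.

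One caveat on your first proposed fix: the crude bound $1/(1-\rho^k)\le 1/(1-\rho)$ in \eqref{mcumulants} yields only $\kappa_\rho^{(m)}\le (m-1)!/(1-\rho)^{m+1}$, one power of $(1-\rho)$ worse than the true order $(m-1)!\,\zeta(m)/(1-\rho)^m$ from \eqref{asymp2}. Since $u_\rho/(1-\rho)\sim \ln z/|\ln(1-\rho)|$, the resulting majorant for $\sum_{m\ge2}\kappa_\rho^{(m)}u_\rho^m/m!$ is of order $(1-\rho)^{-1}\ln^{-2}(1-\rho)$, which diverges as $\rho\uparrow1$; that particular domination therefore does not close the argument. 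Your alternative route does work and is the cleanest way to finish: write
\[
g_\rho(u)=-\sum_{k\ge1}\ln\Bigl(1-(e^u-1)\frac{\rho^k}{1-\rho^k}\Bigr),
\]
note that $(e^{u_\rho}-1)\rho^k/(1-\rho^k)\le (e^{u_\rho}-1)\rho/(1-\rho)\sim \ln z/|\ln(1-\rho)|\to0$ uniformly in $k$, and apply $x\le-\ln(1-x)\le x+x^2$ to get $g_\rho(u_\rho)=(e^{u_\rho}-1)\kappa_\rho^{(1)}+O\bigl(|\ln(1-\rho)|^{-1}\bigr)\to\ln z$, the error being controlled by $(e^{u_\rho}-1)^2\sum_k\bigl(\rho^k/(1-\rho^k)\bigr)^2\le (e^{u_\rho}-1)^2\frac{\rho}{1-\rho}\kappa_\rho^{(1)}$. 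The same one-sided estimate shows $E\bigl(z^{(1+\delta)Y_\rho}\bigr)$ stays bounded, which is precisely the uniform integrability you invoke.
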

We wish to prove a stronger convergence of the sequence of random variables $\{Y_\rho\}_{\rho\in(0,1)}.$
Theorem~\ref{pconvergence} implies that an appropriate discretization $\{Y_{\rho_n}\}_{n\in\bbN}$ of $\{Y_\rho\}_{\rho\in(0,1)}$ can be chosen to achieve the almost sure convergence. The following result identifies a class of sequences $\{\rho_n\}_{n\in\bbN}$ that ensures the almost sure convergence of the discrete
sequence $Y_{\rho_n}.$
\begin{proposition}
Let $\{\rho_n\}_{n\in\nats}\subseteq (0,1)$ be a sequence such that $\rho_n \uparrow 1$ and $\frac1{\ln(1-\rho_n)}\in\ell^2$. Let $\{Y_{\rho_n}\}_{n\in\nats}$ be a sequence of random variables in the probability space $(\Omega,\mathcal{F},P)$ such that $Y_{\rho_n}$ has the same distribution as $\frac{\ln \rho_n}{\ln(1-\rho_n)} X_{\rho_n}$ for each $n$. Then, as $n\ra \infty$, $Y_{\rho_n} \ra 1$ a.\,s.
\end{proposition}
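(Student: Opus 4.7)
The plan is to combine Chebyshev's inequality with the first Borel–Cantelli lemma, using the quantitative variance asymptotic already established in the proof of Theorem~\ref{pconvergence}. Concretely, from the proof of that theorem we have the two facts $E(Y_\rho)\to 1$ as $\rho\uparrow 1$ and
\[
\Var(Y_\rho)\sim\frac{\pi^2}{6}\frac1{\ln^2(1-\rho)}\quad\text{as }\rho\uparrow 1.
\]
The first fact immediately gives $E(Y_{\rho_n})\to 1$, so it suffices to prove that $Y_{\rho_n}-E(Y_{\rho_n})\to 0$ almost surely.

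For each fixed $\veps>0$, Chebyshev's inequality yields
\[
P\bigl(|Y_{\rho_n}-E(Y_{\rho_n})|>\veps\bigr)\leq \frac{\Var(Y_{\rho_n})}{\veps^2}.
\]
By the variance asymptotic above and the assumption $\{1/\ln(1-\rho_n)\}\in\ell^2$, one has $\sum_{n=1}^\infty \Var(Y_{\rho_n})<\infty$. Hence $\sum_{n=1}^\infty P(|Y_{\rho_n}-E(Y_{\rho_n})|>\veps)<\infty$, and the first Borel–Cantelli lemma (which applies without any joint-independence assumption, only requiring that the $Y_{\rho_n}$ live on a common probability space, as we are given) implies
\[
P\bigl(|Y_{\rho_n}-E(Y_{\rho_n})|>\veps\text{ infinitely often}\bigr)=0.
\]

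Applying this with $\veps=1/k$ for each $k\in\nats$ and intersecting the resulting full-measure events over all $k$ produces a single event of probability one on which $Y_{\rho_n}-E(Y_{\rho_n})\to 0$. Combining this with the deterministic convergence $E(Y_{\rho_n})\to 1$ gives $Y_{\rho_n}\to 1$ almost surely, as required.

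There is no real obstacle here: the work has essentially been done inside the proof of Theorem~\ref{pconvergence}, and the only role of the $\ell^2$ hypothesis is to convert the $\rho\uparrow 1$ asymptotic of $\Var(Y_\rho)$ into summability of $\Var(Y_{\rho_n})$. The only point deserving care is the justification, noted above, that the Chebyshev–Borel–Cantelli argument produces an almost-sure statement even though no joint law on $(Y_{\rho_n})_{n\in\nats}$ is specified beyond the marginal distributions and the existence of a common probability space.
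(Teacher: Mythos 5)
Your proof is correct and follows essentially the same route as the paper's: Chebyshev's inequality combined with the variance asymptotic $\Var(Y_\rho)\sim\frac{\pi^2}{6}\ln^{-2}(1-\rho)$, the $\ell^2$ hypothesis to get summability, and the first Borel--Cantelli lemma. The only cosmetic difference is that you center at $E(Y_{\rho_n})$ and handle the deterministic drift of the mean separately (and you make the $\veps=1/k$ intersection explicit, which the paper leaves implicit), whereas the paper absorbs $|E(Y_\rho)-1|<\veps/2$ directly into the Chebyshev bound.
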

\begin{proof}
Let $\epsilon>0$ be given. By the Borel-Cantelli Lemma, a sufficient condition for a.\,s. convergence is that $P(|Y_{\rho_n}-1|>\epsilon)\in\ell^1$. Choose $\rho\in(0,1)$ such that $|E(Y_\rho)-1|<\frac{\epsilon}2$. Then, using Chebyshev's Inequality,
\[\begin{split}
P(|Y_\rho-1|>\epsilon) & \leq P(|Y_\rho-E(Y_\rho)|+|E(Y_\rho)-1|>\epsilon)\\
                        & \leq P\left(|Y_\rho-E(Y_\rho)|>\frac{\epsilon}2 \text{ or } |E(Y_\rho)-1|>\frac{\epsilon}2\right)\\
                        & = P\left(|Y_\rho-E(Y_\rho)|>\frac{\epsilon}2\right) \leq \frac4{\epsilon^2} \Var(Y_\rho).
\end{split}
\]
As noted above, $\Var(Y_\rho)=\frac{\ln^2 \rho}{\ln^2(1-\rho)} \Var(X_\rho) \sim \frac{\pi^2}6 \frac1{\ln^2(1-\rho)}$. Replacing $\rho$ with the terms of $\{\rho_n\}_{n\in\nats}$ described in the theorem statement, we see that $P(|Y_{\rho_n}-1|>\epsilon)\in\ell^1$, proving our result.
\end{proof}
The class of sequences defined in the hypothesis of the proposition above includes those of the form $\rho_n=1-e^{-n^c}$, where $c> \frac12$ is constant. For further research, we wish to broaden the class of sequences that lead to the a.\,s. convergence. For instance, instead of assuming that the models corresponding to different values of $\rho$ are independent, one can consider a standard hierarchial coupling of the underlying random walks leading to the setting where $P(X_{\rho_1}\leq X_{\rho_2})=1$
if $\rho_1<\rho_2.$ In that case one can consider for example $\rho_n=1-n^{-c},$ $c>\frac{1}{2},$ and imitating Etemadi's proof of the law of large numbers (see, for instance, Section~2.4 in \cite{durrett}), namely first considering subsequences $k(n)=\lfloor \alpha^n\rfloor$ with an arbitrary $\alpha>1$ and then using the fact that  the ratio of $\frac{\ln \rho_{k(n)}}{\ln(1-\rho_{k(n)})}$ and $\frac{\ln \rho_{k(n+1)}}{\ln(1-\rho_{k(n+1)})}$ converges to $\alpha^c$ when $n\to\infty,$ prove the almost sure convergence of $Y_{\rho_n}$ for the sequence $\rho_n=1-n^{-c}$ by finally taking $\alpha$ to $1.$
\section{More general frog distributions $\eta$}
\label{mgeneral}
Now let's consider the frog model with drift in which its frog distribution $\eta = \{\eta_x\}_{x=0}^\infty$ is a sequence of natural numbers, i.e., $\eta_x\geq 1$ for all $x\geq 0$ and $\eta_x=0$ elsewhere.
Main results of this section are stated in Theorems~\ref{pconvergence2} and~\ref{mpro} below.
\par
According to Theorem 2.1 in \cite{gantert}, in order to have transience in the frog model with drift $\rho$, and hence an almost surely finite minimum of its range, we must assume that $\sum_{x=0}^\infty \eta_x \rho^x <\infty$. Since we will be dealing with a continuum of choices for $\rho$, it will be useful to designate all of the frog distributions that will guarantee transience in the frog model with drift. Hence, we define the following set of integer sequences:
\[
\mathbb{H}:=\Bigl\{\eta\in \nats^{\nats\cup\{0\}}: \sum_{x=0}^\infty \eta_x \rho^x <\infty \text{ for all }\rho\in(0,1)\Bigr\}.
\]
It's worth noting that $\mathbb{H}$ does contain unbounded elements, such as $\eta=\{1,2,3,4,\dots\}$. In fact, any integer sequence $\eta$ such that $\eta_x =o(\alpha^x)$ as $x\to\infty$ for any $\alpha>1$ is in $\mathbb{H}$.\
\par
Similarly to the single-frog case, we can construct a family of independent random variables $(X_{\rho,\eta})_{\rho\in(0,1),\eta \subset\mathbb{H}}$ on the probability space $(\Omega,\mathcal{F},P)$ such that for each $\rho\in(0,1)$ and $\eta\in\mathbb{H}$, $X_{\rho,\eta}$ shares the same distribution as the negative of the minimum of the frog model with drift $\rho$ and frog distribution $\eta$.
\par
By using similar ideas as in the single-frog case, we can find the distribution of $X_{\rho,\eta}$:
\begin{eqnarray} \label{cdf}
P(X_{\rho,\eta}\leq x) =\prod_{k=0}^\infty \bigl(1-\rho^{x+k+1}\bigr)^{\eta_k}.
\end{eqnarray}
For simplicity, we will define the integer sequence $\{\Delta_k\}_{k=0}^\infty$ by $\Delta_0=\eta_0$ and for all $k\geq 1$, $\Delta_k=\eta_k-\eta_{k-1}$. The density of $X_{\rho,\eta}$ is then
\begin{eqnarray}\label{density}\begin{split}
P(X_{\rho,\eta}=x) &= \prod_{k=0}^\infty \bigl(1-\rho^{x+k+1}\bigr)^{\eta_k}-\prod_{k=0}^\infty \bigl(1-\rho^{x+k}\bigr)^{\eta_k}\\
&=\prod_{k=0}^\infty \bigl(1-\rho^{x+k+1}\bigr)^{\eta_k}\Bigl(1-\prod_{k=0}^\infty(1-\rho^{x+k})^{\Delta_k}\Bigr).
\end{split}
\end{eqnarray}
One special case to consider is $\eta_x=n\in\nats$ for all $x\geq 0$. Then, \eqref{cdf} and \eqref{density} become
\[
P(X_{\rho,\eta}\leq x)=\prod_{k=0}^\infty (1-\rho^{x+k+1})^n=(\rho^{x+1};\rho)_\infty^n,
\]
\[
P(X_{\rho,\eta}=x)=\bigl(1-(1-\rho^x)^n\bigr)(\rho^{x+1};\rho)_\infty^n.
\]
With frog distributions $\eta$ that differ from the single-frog case, we could assume that the moments of $X_{\rho,\eta}$ grow at different rates than $\frac{\ln(1-\rho)}{\ln\rho}$ found in Corollary \ref{amoments}. However, by the theorem below, if $\eta$ grows at a ``slow enough'' rate, the moments of $X_{\rho,\eta}$ will behave asymptotically similar to those of the single-frog case.
\begin{theorem}
\label{pconvergence2}
For each $\rho\in(0,1)$, let $Z_\rho=\frac{\ln(1-\rho)}{\ln\rho}$. Suppose $\{\eta_k\}_{k=0}^\infty\in\mathbb{H}$ such that $\lim_{\rho\uparrow 1} (1-\rho)^{1+\delta} \sum_{k=0}^\infty \eta_k \rho^k = 0$ for all $\delta>0$. Then, for all $m,n\in\nats$, $E(X_{\rho,\eta}^m)\sim Z_\rho^m$ as $\rho\uparrow 1$.
\end{theorem}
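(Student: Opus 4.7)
The plan is to sandwich $E(X_{\rho,\eta}^m)/Z_\rho^m$ between matching asymptotic bounds. For the lower bound, observe that $\eta_k\geq 1$ gives $(1-\rho^{x+k+1})^{\eta_k}\leq 1-\rho^{x+k+1}$ for every $k$, so by \eqref{cdf},
\begin{align*}
P(X_{\rho,\eta}\leq x)\leq \prod_{k=0}^\infty\bigl(1-\rho^{x+k+1}\bigr)=P(X_\rho\leq x),
\end{align*}
and $X_{\rho,\eta}$ stochastically dominates the single-frog variable $X_\rho$. Monotonicity of $x\mapsto x^m$ together with Corollary~\ref{amoments} yield $E(X_{\rho,\eta}^m)\geq E(X_\rho^m)\sim Z_\rho^m$, hence $\liminf_{\rho\uparrow 1} E(X_{\rho,\eta}^m)/Z_\rho^m\geq 1$.

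For the matching upper bound I work from the elementary identity
\begin{align*}
E(X_{\rho,\eta}^m)=\sum_{x=0}^\infty\bigl((x+1)^m-x^m\bigr)\,P(X_{\rho,\eta}>x),
\end{align*}
valid for any $\nats\cup\{0\}$-valued random variable. Setting $A(\rho):=\sum_{k=0}^\infty \eta_k\rho^k$, I derive a tail bound of the form $P(X_{\rho,\eta}>x)\leq C\rho^{x+1}A(\rho)$ valid whenever $\rho^{x+1}A(\rho)$ is bounded away from $1$; a clean route is to apply $-\log(1-b)\leq b/(1-b)$ termwise in the product \eqref{cdf}, giving $-\log P(X_{\rho,\eta}\leq x)\leq \rho^{x+1}A(\rho)/(1-\rho^{x+1})$, and then use $1-e^{-u}\leq u$. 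I next split the series at $x_0:=\lceil(1+\delta)Z_\rho\rceil$ for an arbitrary small $\delta>0$. For $x<x_0$ the crude bound $P(X_{\rho,\eta}>x)\leq 1$ already gives a contribution at most $x_0^m\sim (1+\delta)^m Z_\rho^m$. For $x\geq x_0$ I insert the tail bound and invoke the elementary geometric-tail estimate
\begin{align*}
\sum_{y\geq N} y^{m-1}\rho^y\leq C_m\,\frac{N^{m-1}\rho^N}{1-\rho},\qquad N(1-\rho)\to\infty,
\end{align*}
which, together with the identity $\rho^{Z_\rho}=1-\rho$ (so that $\rho^{x_0}=(1-\rho)^{1+\delta}$), bounds this contribution by a constant multiple of $Z_\rho^{m-1}A(\rho)(1-\rho)^\delta$.

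The hypothesis enters only at this last step: it converts $(1-\rho)^{1+\delta}A(\rho)\to 0$ and $Z_\rho\sim|\log(1-\rho)|/(1-\rho)$ into $A(\rho)(1-\rho)^\delta=o(Z_\rho)$, so the tail contribution is $o(Z_\rho^m)$. Combining the two regimes yields $\limsup E(X_{\rho,\eta}^m)/Z_\rho^m\leq(1+\delta)^m$ for each $\delta>0$; letting $\delta\downarrow 0$ completes the proof. The main obstacle will be executing the geometric-tail estimate cleanly at a threshold $x_0\sim Z_\rho$ that carries a genuine logarithmic factor compared with $1/(1-\rho)$, so that the leftover factor $Z_\rho^{m-1}A(\rho)(1-\rho)^\delta$ is indeed negligible against $Z_\rho^m$; this is precisely the place where the ``for all $\delta>0$'' quantifier in the hypothesis is essential rather than cosmetic.
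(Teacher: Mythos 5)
Your proof is correct and follows essentially the same route as the paper's: a stochastic-domination lower bound from the single-frog case, a split of the moment sum at $(1+\delta)Z_\rho$ with the head bounded trivially by $\bigl((1+\delta)Z_\rho\bigr)^m$, a first-order bound on the product in \eqref{cdf} combined with a weighted geometric-tail estimate (the paper's Lemma~\ref{zrho} in your notation), the hypothesis entering only to kill the tail, and finally $\delta\downarrow 0$. The one genuine (and welcome) simplification is that by working with the survival function $P(X_{\rho,\eta}>x)$ through the tail-sum identity you bypass the paper's preliminary reduction to a nondecreasing $\eta$ (its running-maximum sequence $\theta$), which the paper needs only so that the increments $\Delta_k$ appearing in the density \eqref{density} are nonnegative.
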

The proof of the theorem is given below in this section, after a shirt discussion of the result.  Note that according to the theorem, any even frog distribution $\eta=\{n,n,\dots\}$, where $n\in\nats$, will produce the same asymptotic rate for the moments. Not only that, but there exist unbounded choices for $\eta$ that produce the same rate as well. One simple example of such a choice is $\eta=\{1,2,3,\dots\}$.
\par
A consequence of Theorem \ref{pconvergence2} is an analogue to Theorem \ref{pconvergence} in the single-frog case, which reveals that $\frac{\ln(1-\rho)}{\ln\rho}$ is also an appropriate scaling limit for $X_{\rho,\eta}$'s convergence in probability as $\rho\uparrow1$.
\begin{theorem}
\label{mpro}
Let $\eta\in\mathbb{H}$ satisfy the conditions in the hypotheses of Theorem \ref{pconvergence2}. Let $Y_{\rho,\eta}$ share the same distribution as $\frac{\ln \rho}{\ln(1-\rho)}X_{\rho,\eta}$ for each $\rho\in (0,1)$. Then, as $\rho\uparrow 1$, $Y_{\rho,\eta} \ra 1$ in probability.
\end{theorem}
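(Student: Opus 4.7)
The plan is to imitate the proof of Theorem~\ref{pconvergence} from the single-frog case, using the moment asymptotics supplied by Theorem~\ref{pconvergence2} in place of the explicit $q$-digamma formulas that were available there. Concretely, once we know that the first two moments of $X_{\rho,\eta}$ both behave like $Z_\rho^m$, the scaled variable $Y_{\rho,\eta}=\frac{\ln\rho}{\ln(1-\rho)}X_{\rho,\eta}$ will have expectation converging to $1$ and variance converging to $0$, which together with Chebyshev's inequality immediately gives convergence in probability.

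In more detail, I would first apply Theorem~\ref{pconvergence2} with $m=1$: since $E(X_{\rho,\eta})\sim Z_\rho$ as $\rho\uparrow 1$, and $Y_{\rho,\eta}$ is by definition $Z_\rho^{-1}X_{\rho,\eta}$, we get
\[
E(Y_{\rho,\eta})=\frac{\ln\rho}{\ln(1-\rho)}E(X_{\rho,\eta})=Z_\rho^{-1}E(X_{\rho,\eta})\longrightarrow 1.
\]
Next, applying Theorem~\ref{pconvergence2} with $m=2$ yields $E(X_{\rho,\eta}^2)\sim Z_\rho^2$, so
\[
E(Y_{\rho,\eta}^2)=Z_\rho^{-2}E(X_{\rho,\eta}^2)\longrightarrow 1,
\]
and hence $\Var(Y_{\rho,\eta})=E(Y_{\rho,\eta}^2)-E(Y_{\rho,\eta})^2\to 1-1=0$.

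To finish, fix $\epsilon>0$ and choose $\rho$ close enough to $1$ so that $|E(Y_{\rho,\eta})-1|<\epsilon/2$. By the triangle inequality and Chebyshev,
\[
P(|Y_{\rho,\eta}-1|>\epsilon)\le P\Bigl(|Y_{\rho,\eta}-E(Y_{\rho,\eta})|>\tfrac{\epsilon}{2}\Bigr)\le \frac{4}{\epsilon^2}\Var(Y_{\rho,\eta}),
\]
which tends to $0$ as $\rho\uparrow 1$ by the previous step.

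There is no real obstacle beyond invoking Theorem~\ref{pconvergence2} twice; the only mild subtlety is that unlike in the single-frog case, where one had the explicit estimate $\Var(X_\rho)\sim\frac{\pi^2}{6}\ln^{-2}\rho$ at hand, here we must extract the decay of $\Var(Y_{\rho,\eta})$ indirectly from the cancellation $E(Y_{\rho,\eta}^2)-E(Y_{\rho,\eta})^2\to 0$. All the heavy lifting concerning $\eta$ and the hypothesis $(1-\rho)^{1+\delta}\sum_k\eta_k\rho^k\to 0$ has been absorbed into Theorem~\ref{pconvergence2}, so once that result is in place the present theorem is essentially a three-line corollary.
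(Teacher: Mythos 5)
Your proof is correct and follows exactly the route the paper intends: Theorem~\ref{mpro} is presented there as a direct consequence of the moment asymptotics in Theorem~\ref{pconvergence2}, mirroring the proof of Theorem~\ref{pconvergence} (mean tends to $1$, variance tends to $0$, then Chebyshev). Your observation that $\Var(Y_{\rho,\eta})\to 0$ must be extracted from the cancellation $E(Y_{\rho,\eta}^2)-E(Y_{\rho,\eta})^2\to 0$ rather than from an explicit variance formula is exactly the right (and only) point of difference from the single-frog case.
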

To clarify the intuition behind this result it is instructive to consider the case of an even frog configuration $\eta_x=m$ for all $x\geq 0,$ where $m\in\bbN$ is a fixed integer, and observe that the corresponding model can be thought of as a composition of $m$ independent models with $\eta_x=1.$ In this case, Theorem~\ref{mpro} is a direct implication of the result in Theorem~\ref{pconvergence} following by a simple observation that since the random variable $Y_\rho$ is asymptotic to a constant, the same is true for its analogue $Y_{\rho,\eta}$ in Theorem~\ref{mpro} which is the minimum of $m$ independent copies of $Y_\rho.$ From this perspective, Theorems~\ref{pconvergence2} and Theorem~\ref{mpro} can be viewed as an indirect extension of this argument to sequences $\eta_x$ growing sufficiently slowly, so that
they can be well enough approximated by initial configurations with an even distribution of frogs (notice that the further is the initial placement of a frog from the origin the less relevant it is for the asymptotic of $Y_{\rho,\eta}$).
\par
Before we begin the proof of Theorem \ref{pconvergence2}, we must first introduce a lemma.
\begin{lemma} \label{zrho}
For each $\rho\in(0,1)$, let $Z_\rho=\frac{\ln(1-\rho)}{\ln\rho}$. Then, for all $\delta>0$ and $m\in\nats$,
\begin{eqnarray}
\label{ratio}
\sum_{x=1}^\infty \bigl(Z_\rho(1+\delta)+x\bigr)^m \rho^x \sim \frac{Z_\rho^m (1+\delta)^m}{1-\rho}
\end{eqnarray}
as $\rho\uparrow1$.
\end{lemma}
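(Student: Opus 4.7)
The plan is to expand $(Z_\rho(1+\delta)+x)^m$ by the binomial theorem, reducing the sum on the left to a finite linear combination of the standard sums $S_k(\rho):=\sum_{x=1}^\infty x^k\rho^x$ for $0\le k\le m$:
\begin{equation*}
\sum_{x=1}^\infty\bigl(Z_\rho(1+\delta)+x\bigr)^m\rho^x=\sum_{k=0}^m\binom{m}{k}\bigl(Z_\rho(1+\delta)\bigr)^{m-k}S_k(\rho).
\end{equation*}
I would then argue that the $k=0$ contribution already produces the claimed asymptotic and that each $k\geq 1$ term is of strictly smaller order as $\rho\uparrow 1$.

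For the sums $S_k$, I would invoke the classical asymptotic $S_k(\rho)\sim \frac{k!}{(1-\rho)^{k+1}}$ as $\rho\uparrow 1$. The case $k=0$ is the identity $S_0(\rho)=\rho/(1-\rho)$, and for $k\geq 1$ one obtains it by a short induction, starting from $\sum_{x=0}^\infty\rho^x=1/(1-\rho)$ and repeatedly applying $\rho\,\frac{d}{d\rho}$, keeping only the leading $(1-\rho)^{-(k+1)}$ singularity. With this in hand, the $k=0$ contribution equals $\bigl(Z_\rho(1+\delta)\bigr)^m\cdot\rho/(1-\rho)\sim Z_\rho^m(1+\delta)^m/(1-\rho)$, which is exactly the right-hand side of \eqref{ratio}. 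For $k\geq 1$, the ratio of the $k$-th term to this leading contribution is, up to an $o(1)$ factor,
\begin{equation*}
\frac{\binom{m}{k}k!}{(1+\delta)^k\bigl(Z_\rho(1-\rho)\bigr)^k}.
\end{equation*}
Since $\ln\rho\sim -(1-\rho)$ as $\rho\uparrow 1$, one has $Z_\rho(1-\rho)=\frac{\ln(1-\rho)}{\ln\rho}(1-\rho)\sim -\ln(1-\rho)\to\infty$, so each such ratio tends to $0$. Summing the finitely many $k\ge 1$ contributions and combining with the $k=0$ leading term yields the stated equivalence.

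The main obstacle is simply recording the polylogarithm-type asymptotic $S_k(\rho)\sim k!/(1-\rho)^{k+1}$ in a self-contained way, together with the observation $Z_\rho(1-\rho)\sim |\ln(1-\rho)|$; both are elementary but are the only nonalgebraic steps. Once these are in place, the rest of the argument is pure bookkeeping with finitely many terms of the form (power of $1/(1-\rho)$) times (power of $\ln(1-\rho)$), and the dominance of the $k=0$ term follows by inspection.
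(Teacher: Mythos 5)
Your proposal is correct and follows essentially the same route as the paper: both expand by the binomial theorem, isolate the $k=0$ term as the leading contribution, control $\sum_x x^k\rho^x$ by (a bound asymptotic to) $k!/(1-\rho)^{k+1}$, and kill the $k\ge 1$ terms using $Z_\rho(1-\rho)\sim|\ln(1-\rho)|\to\infty$. The only cosmetic difference is that the paper works with the quotient of the two sides and uses the inequality $\sum_x x^j\rho^x\le j!/(1-\rho)^{j+1}$ rather than the exact asymptotic, but the argument is the same.
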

\begin{proof}[Proof of Lemma~\ref{zrho}]
For the quotient of the left- and right-hand sides in  \eqref{ratio} we have
\[\begin{split}
(1-\rho)\sum_{x=1}^\infty \Bigl(1+\dfrac{x}{Z_\rho(1+\delta)}\Bigr)^m\rho^x&=(1-\rho)\sum_{x=1}^\infty \rho^x \sum_{j=0}^m {m\choose j}\frac{x^j}{Z_\rho^j(1+\delta)^j}\\
&=(1-\rho)\sum_{j=0}^m \frac1{Z_\rho^j(1+\delta)^j}{m\choose j} \Bigl(\sum_{x=1}^\infty x^j\rho^x\Bigr)\\
&\leq (1-\rho)\sum_{j=0}^m \frac1{Z_\rho^j(1+\delta)^j}{m\choose j}\frac{j!}{(1-\rho)^{j+1}}\\
&= \sum_{j=0}^m \frac{m!}{(m-j)!} \frac1{[Z_\rho(1+\delta)(1-\rho)]^j}.
\end{split}\]

Now, as $\rho\uparrow 1$, $\frac{\ln(\rho)}{1-\rho}\ra -1$ and $\ln(1-\rho)\ra -\infty$. Thus, $Z_\rho(1-\rho)\ra \infty$. Hence, for any $j>0$, the $j^\text{th}$ term of the above sum goes to 0 as $\rho\uparrow 1$. So when taking the limit, the only term in the sum that survives would be the $0^\text{th}$, which is equal to 1 for all $\rho$.
\end{proof}
We now proceed with the proof of the above theorem.
\begin{proof}[Proof of Theorem \ref{pconvergence2}]
Since the frog model corresponding to $X_{\rho,\eta}$ contains more frogs than the single-frog case, by Corollary \ref{amoments},
\[
\liminf_{\rho\uparrow1} \frac{E(X^m_{\rho,\eta})}{Z_\rho^m}\geq 1.
\]
 For the other inequality, note that for any $\eta\in\mathbb{H}$,
 \[
 E(X^m_{\rho,\eta})\leq E(X^m_{\rho,\theta}),
 \]
 where $\theta_k=\max\{\eta_j:j=1,2,\dots,k\}$. Thus, we can assume WLOG that $\eta$ is a nondecreasing sequence, and hence $\Delta_k\geq 0$ for all $k$.
\par
 Choosing $\delta>0$, we consider the sum for $E(X^m_{\rho,\eta})$ and split it at the point $\lfloor Z_\rho(1+\delta)\rfloor$. For the tail sum, we find that
\[\begin{split}
\sum_{x=\lfloor Z_\rho(1+\delta)\rfloor+1}^\infty x^m P(X_{\rho,\eta}=x)&\leq \sum_{x=\lfloor Z_\rho(1+\delta)+1\rfloor}^\infty x^m \Bigl(1-\prod_{k=0}^\infty (1-\rho^{x+k})^{\Delta_k}\Bigr)\\
& \sim \sum_{x=1}^\infty \bigl(Z_\rho (1+\delta)+x\bigr)^m \Bigl(1-\prod_{k=0}^\infty \bigl(1-\rho^{Z_\rho (1+\delta)}\rho^{x+k}\bigr)^{\Delta_k}\Bigr).
\end{split}\]
Since $\rho^{Z_\rho}=(1-\rho)$, we can see, after expanding the infinite product up to the first-order terms, that
\[\begin{split}
\sum_{x=\lfloor Z_\rho(1+\delta)\rfloor+1}^\infty x^m P(X_{\rho,\eta}=x)& \leq \sum_{x=1}^\infty \bigl(Z_\rho (1+\delta)+x\bigr)^m \Bigl(1-\prod_{k=0}^\infty \bigl(1-(1-\rho)^{1+\delta}\rho^{x+k}\bigr)^{\Delta_k}\Bigr)\\
& \leq \sum_{x=1}^\infty \bigl(Z_\rho (1+\delta)+x\bigr)^m \Bigl(\sum_{k=0}^\infty \Delta_k (1-\rho)^{1+\delta} \rho^{x+k}\Bigr)\\
& = (1-\rho)^{1+\delta} \Bigl( \sum_{k=0}^\infty \Delta_k \rho^k \Bigr)\sum_{x=1}^\infty \bigl(Z_\rho (1+\delta)+x\bigr)^m \rho^x\\
& \sim (1-\rho)^{1+\delta} \Bigl( \sum_{k=0}^\infty \eta_k \rho^k \Bigr)Z_\rho^m (1+\delta)^m.
\end{split}\]
The equivalence result in last line comes from Lemma \ref{zrho}, combined with the fact that $\sum_{k=0}^\infty \Delta_k \rho^k=\eta_0 + (1-\rho)\sum_{k=1}^\infty \eta_k \rho^k$.
\par
We also derive the following upper bound for the finite sum:
\[\begin{split}
\sum_{x=0}^{\lfloor Z_\rho (1+\delta)\rfloor} x^m \,P(X_{\rho,\eta}=x)&\leq \lfloor Z_\rho\rfloor^m (1+\delta)^m P\bigl(X_{\rho,\eta}\leq Z_\rho(1+\delta)\bigr)\\
& \leq Z_\rho^m (1+\delta)^m.
\end{split}\]
Thus, we can derive an upper bound for the following limit:
\[\begin{split}
\limsup_{\rho\uparrow1} \frac{E(X^m_{\rho,\eta})}{Z_\rho^m}&\leq \lim_{\rho\uparrow 1} (1+\delta)^m \Bigl(1+(1-\rho)^{1+\delta}\cdot\sum_{k=0}^\infty \eta_k \rho^k \Bigr)\\
&=(1+\delta)^m.
\end{split}\]
Since $\delta>0$ is arbitrary, $\limsup_{\rho\uparrow1} \frac{E(X^m_{\rho,\eta})}{Z_\rho^m}\leq 1$, and this completes the proof.
\end{proof}
\section{Initial configuration $\eta$ supported on the whole $\ints$}
\label{zconf}
In this section we will provide asymptotic bounds for the minimum of the frog model's range under the assumption that $\eta$ is supported on all of $\ints$. The main result of this section is stated in Theorem~\ref{bounds} below.
\par
Up until now, we have assumed that there were no sleeping frogs on any of the negative sites. With this assumption, all of the frogs on $\ints_+$ would eventually wake w.p.1, and we only needed to observe the collective minima of those frogs. However, when we consider the transient frog model with configuration $\eta$ supported on all of $\ints$, we now have a random number of active frogs originating from the negative sites that have the potential of expanding the range. We begin to explore the moments of the minimum of this case by the groundwork laid in the previous sections for the $\eta$ supported only on nonnegative sites.
\par
Fix any $n\in\mathbb N$. We will assume throughout this section that $\eta_x=n$ for any $x\in\ints,$ that is exactly $n$ frogs are initially placed at each site of $\ints.$ Our proofs in this section rely on the following description of the ``avalanche structure" of the model. We refer to the frogs initially located in the nonnegative sites of $\ints$ as the ``first wave". If we just observe the nonnegative frogs, we can locate the left-most site visited by the frogs from the first wave. We consider the frogs on the negative sites down to the minimum ever visited by the first wave to be the ``second wave'' of frogs being activated. Tracking the left-most site visited by the frogs from the second wave, we designate a ``third wave" of frogs activated between the subsequent minimums. We will continue to label these activated frogs in terms of waves. Since we assume a transient model, there will eventually be a final wave of frogs w.p.1 that never venture any more to the left than their initial locations. In this section, let the negative of the left-most site visited by any of the active frogs be $\witi X_{\rho,n}$.
\par
The following theorem provides upper and lower bounds for the $m^{\text{th}}$ moment of $\witi X_{\rho,n}$ for any given $m\in\nats$. While the bounds contain the familiar $Z_\rho=\frac{\ln(1-\rho)}{\ln(\rho)}$ term from Sections~\ref{single} and~\ref{mgeneral}, they are not immediately obvious from the previous results.
\begin{theorem}
\label{bounds}
The following holds for any $m\in\nats:$
\begin{itemize}
\item[(a)] The $m^{\text{th}}$ moment of $\witi X_{\rho,n}$ is bounded above by a function $\phi:(0,1)\ra[0,\infty)$ such that
\beqn
\label{upper}
\phi(\rho)\sim Z_\rho^m\Bigl(\frac{1-\rho}{2\pi}\Bigr)^{\frac{n}2}\exp\Bigl\{\frac{\pi^2}{6}\frac{mn}{1-\rho}\Bigr\} \quad \mbox{as}\quad \rho\uparrow1.
\feqn
\item[(b)] For any function $\delta:(0,1)\ra(0,1)$ such that $\lim_{\rho\uparrow1}\delta(\rho)=0$ and $\lim_{\rho\uparrow1}(1-\rho)^{\delta(\rho)}=0$, the $m^{\text{th}}$ moment of $\witi X_{\rho,n}$ is bounded below by a function $\psi_\delta:(0,1)\ra[0,\infty)$ such that
    \beqn
    \label{lower}
    \psi_\delta(\rho)\sim m!\, Z_\rho^m\exp\Bigl\{\frac{mn}{(1-\rho)^{\delta(\rho)}}\Bigr\}\quad \mbox{as} \quad \rho\uparrow1.
    \feqn
\end{itemize}
\end{theorem}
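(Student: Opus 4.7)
The plan is to analyze both bounds through the avalanche decomposition already sketched in the statement. Set $W_0:=0$ and, for $k\geq 1$, let $W_k$ denote the magnitude of the leftmost site visited after the first $k$ waves, with $R_k:=W_k-W_{k-1}\geq 0$. Transience implies $\widetilde X_{\rho,n}=\lim_k W_k=\sum_{k\geq 1}R_k$ a.s. Computing hitting probabilities for the $nR_{k-1}$ wave-$k$ frogs, which sit at distances $l,l+1,\dots,l+R_{k-1}-1$ from the target site $-W_{k-1}-l$, yields
\begin{equation*}
P\bigl(R_k\geq l\mid R_{k-1}=r\bigr)=1-\prod_{j=0}^{r-1}(1-\rho^{l+j})^n,\qquad P(R_k=0\mid R_{k-1}=r)=(\rho;\rho)_r^n,
\end{equation*}
so $(R_k)$ is a Markov chain. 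The initial wave $R_1$ is precisely the variable $X_{\rho,\eta}$ with $\eta_x\equiv n$ on $\{x\geq 0\}$ studied in Section~\ref{mgeneral}, concentrating around $Z_\rho$ with $E[R_1^m]\sim Z_\rho^m$ by Theorem~\ref{pconvergence2}.

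For part (a), I would use the monotone inequality $(1-\rho^{l+j})^n\geq(1-\rho^l)^n$ to stochastically dominate $R_k\mid R_{k-1}=r$ by the maximum of $nr$ iid geometric variables with $P(V\geq l)=\rho^l$; standard extreme-value analysis then gives $E[R_k^m\mid R_{k-1}=r]\lesssim Z_\rho^m$ whenever $r$ is of order $Z_\rho$, which a truncation on that scale enforces. Since $r\mapsto(\rho;\rho)_r$ is decreasing, the per-step extinction probability obeys $(\rho;\rho)_r^n\geq(\rho;\rho)_\infty^n$, so the total number $K$ of nonzero waves is dominated above by a geometric variable with success probability $(\rho;\rho)_\infty^n$. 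Inserting the Dedekind-$\eta$ asymptotic $(\rho;\rho)_\infty\sim\sqrt{2\pi/(1-\rho)}\exp\bigl(-\pi^2/(6(1-\rho))\bigr)$, which follows from the modular transformation $\eta(-1/\tau)=\sqrt{-i\tau}\,\eta(\tau)$, produces $E[K^m]\lesssim m!\bigl((1-\rho)/(2\pi)\bigr)^{mn/2}\exp\bigl(mn\pi^2/(6(1-\rho))\bigr)$, and multiplying by the typical scale $Z_\rho^m$ yields a bound of the order asserted in~\eqref{upper}.

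For part (b), I run the same scheme in reverse: given $\delta(\rho)$ as in the hypothesis, select a truncation level $r^\ast(\rho)$ such that $(\rho;\rho)_{r^\ast}^n\leq\exp\bigl(-n/(1-\rho)^{\delta(\rho)}\bigr)$. The two hypotheses on $\delta$ are exactly what permit such an $r^\ast$ to lie on a scale where $R_k$ still concentrates near $Z_\rho$ with probability tending to $1$ (a direct application of Theorem~\ref{pconvergence2}). A union bound along the avalanche shows that conditional on a high-probability ``good'' event (on which every $R_{k-1}$ exceeds $r^\ast$ until extinction), $K$ stochastically dominates a geometric variable with mean $\exp\bigl(n/(1-\rho)^{\delta(\rho)}\bigr)$. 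Combining this with the lower bound $R_k\geq(1-o(1))Z_\rho$ on the good event and the identity $E[\mathrm{Geom}(q)^m]\sim m!/q^m$ for small $q$ produces $\psi_\delta(\rho)$ as in~\eqref{lower}.

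The principal difficulty is preserving the sharp Dedekind-$\eta$ constant $\pi^2/6$ in~\eqref{upper}. Crude stochastic dominations tend to squander this constant, so the argument must thread carefully between (i) stopping the Markov chain as soon as $R_{k-1}$ leaves a window around $Z_\rho$, (ii) controlling the tail beyond that window by an independent rough estimate (e.g., comparing to $nr$ concentrated frogs at the leftmost position), and (iii) invoking the sharp Dedekind asymptotic only in the regime where it genuinely governs $(\rho;\rho)_{R_{k-1}}^n$. The lower bound is substantially more forgiving: its target $\exp\bigl(mn/(1-\rho)^{\delta(\rho)}\bigr)$ is subexponential in $(1-\rho)^{-1}$, which is exactly what can be salvaged from a truncated $q$-Pochhammer rather than the full limit $(\rho;\rho)_\infty^n$, and no sharp analytic estimate for the latter is needed.
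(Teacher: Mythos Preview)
Your avalanche strategy and the end-game inputs (geometric wave count, the Dedekind-$\eta$ asymptotic for $(\rho;\rho)_\infty$, and Theorem~\ref{pconvergence2} for the per-wave scale) are exactly the right ingredients, and they are the ones the paper uses. What is missing is the coupling that makes the bookkeeping close.

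For part~(a), you work with the genuine Markov chain $(R_k)$, where the size of wave~$k$ and the number $K$ of nonzero waves are dependent, so an inequality of the type $E\bigl[(\sum R_k)^m\bigr]\leq E[K^m]\cdot\sup_k E[R_k^m]$ is not available, and your max-of-$nr$-geometrics bound makes $E[R_k^m\mid R_{k-1}=r]$ grow with $r$ rather than stay uniform. The paper's device is to \emph{enlarge} each wave: at the start of wave~$k$, place $n$ additional fresh frogs at every site to the right of the current minimum, so that wave~$k$ is a translated copy of the full nonnegative model of Section~\ref{mgeneral}. This makes the wave contributions i.i.d.\ copies of $X_{\rho,n}$ (hence $E[Y^m]\sim Z_\rho^m$ uniformly), independent of a wave count that is now \emph{exactly} geometric with parameter $\varepsilon_{\rho,n}=((\rho;\rho)_\infty)^n$; Minkowski then gives $E[W^m\mid T]\leq T^m E[X_{\rho,n}^m]$ in one line, and the sharp constant $\pi^2/6$ drops out of $\varepsilon_{\rho,n}^{-m}$ with no ``threading''. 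Note that the natural domination $R_k\leq X_{\rho,n}^{(k)}$ you could have used instead of the max bound is precisely this coupling.

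For part~(b), the union bound underlying your good-event argument does not close: with the choice $r^\ast\approx Z_\rho(1-\delta_\rho)$ one computes $P(R_k<r^\ast)\approx\exp\bigl(-n(1-\rho)^{-\delta_\rho}\bigr)$, while the target number of waves is of the same exponential order, so the expected number of ``bad'' waves is of order~$1$. The paper again avoids the dependence by discarding the random avalanche and using \emph{deterministic} blocks of length $L=Z_\rho(1-\delta_\rho)$: activate only the $nL$ frogs in the current block, declare success if they collectively reach $L$ sites further left, and iterate. These trials are i.i.d.\ by construction, so the number $\tau_{\rho,\delta}$ of successful blocks is exactly geometric with parameter $\theta_{\rho,\delta}=\bigl((\rho^{L};\rho)_L\bigr)^n$, and $\widetilde X_{\rho,n}\geq \tau_{\rho,\delta}\cdot L$. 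The elementary bound $\theta_{\rho,\delta}\leq\exp\bigl(-n(1-(1-\rho)^{1-\delta_\rho})(1-\rho)^{-\delta_\rho}\bigr)$ together with Lemma~\ref{mg} then yields~\eqref{lower} directly, with no conditioning on a good event.
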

\begin{remark}
An example of a function $\delta$ that satisfies the conditions of Theorem \ref{bounds} (b) is $\delta(\rho)=\bigl\{\ln \bigl(\frac1{1-\rho}\bigr)\bigr\}^{-\alpha}$ for a constant $\alpha\in(0,1)$ and $\rho>1-e^{-1}$ (as long as we are only interested in the asymptotic as $\rho\uparrow 1,$ the values of $\delta(\rho)$ can be assigned arbitrarily for $\rho\leq 1-e^{-1}$). In this case, $\psi_\delta(\rho)\sim m!\, Z_\rho^m\exp\bigl\{mn\exp\bigl(|\ln(1-\rho)|^{1-\alpha}\bigr)\bigr\}$ as $\rho\uparrow1.$ For the sake of comparison with the upper bound, notice that the latter can be written as $\phi(\rho)\sim Z_\rho^m\bigl(\frac{1-\rho}{2\pi}\bigr)^{\frac{n}2}\exp\bigl\{mn\exp\bigl(|\ln(1-\rho)|\bigr)\bigr\}$ as $\rho\uparrow1,$ and that the parameter $\alpha\in (0,1)$ can be chosen arbitrarily close to zero.
\end{remark}
To motivate and clarify the intuition behind the coupling construction employed in the proof of Theorem~\ref{bounds} given below, we precede the proof by
the following observation.
\begin{remark}
\label{r1}
Consider the model described in Section~\ref{single}, namely $\eta_x=1$ for $x\geq 0$ and $\eta_x=0$ for $x<0.$ Fix any $\delta>0$ and consider
the probability $P_{\rho,\delta}$ that no one of the frogs initially placed to the right of $Z_\rho(1+\delta)$ will ever reach zero. For simplicity and without loss of generality we will treat $Z_\rho(1+\delta)$ as an integer. Then, since $\rho^{Z_\rho(1+\delta)}=(1-\rho)^{1+\delta},$
\beq
P_{\rho,\delta}=\prod_{j=1}^\infty \bigl(1-\rho^{Z_\rho(1+\delta)+j}\bigr)=\prod_{j=1}^\infty \bigl(1-(1-\rho)^{1+\delta}\rho^j\bigr).
\feq
Since $1-x>e^{-2x}$ for all $x>0$ small enough, we obtain that for $\rho$ sufficiently close to $1,$
\beq
P_{\rho,\delta}\geq \exp\Bigl\{-2(1-\rho)^{1+\delta}\sum_{j=1}^\infty \rho^j\Bigr\}=\exp\bigl\{-2\rho(1-\rho)^\delta\bigr\},
\feq
and hence
\beqn
\label{observe}
\lim_{\rho \uparrow 1}P_{\rho,\delta}=1.
\feqn
Now, recall that the results of Section~\ref{single} indicate tight concentration of the distribution of $X_\rho$ around $Z_\rho$ as $\rho\uparrow 1.$
On the other hand, heuristically, \eqref{observe} indicates that for large values of $\rho$ only the first $Z_\rho$ frogs are relevant to the dynamics of the model.
To further support this claim, consider the probability $Q_{\rho,\delta}$ that no one of the $Z_\rho(1-\delta)$ frogs initially placed at the first
$Z_\rho(1-\delta)$ nonnegative integers will ever reach $-Z_\rho(1-\delta).$ Similarly as before, we treat $Z_\rho(1-\delta)$ as an integer. Then
\beq
Q_{\rho,\delta}&=&\prod_{j=0}^{Z_\rho(1-\delta)-1} \bigl(1-\rho^{Z_\rho(1-\delta)+j}\bigr)\leq
\exp\Bigl\{-(1-\rho)^{1-\delta}\sum_{j=0}^{Z_\rho(1-\delta)-1}\rho^j\Bigr\}
\\
&=&
\exp\Bigl\{-(1-\rho)^{-\delta}\bigl(1-\rho^{Z_\rho(1-\delta)}\bigr)\Bigr\}=\exp\Bigl\{-(1-\rho)^{-\delta}\bigl(1-(1-\rho)^{1-\delta}\bigr)\Bigr\}
\\
&\leq&
\exp\Bigl\{-(1-\rho)^{-\delta}\bigl(1-e^{-\rho(1-\delta)}\bigr)\Bigr\}.
\feq
Thus
\beqn
\label{observe1}
\lim_{\rho \uparrow 1}Q_{\rho,\delta}=0.
\feqn
Heuristically, \eqref{observe} along with \eqref{observe1} tell us that the dynamics of the model considered in Section~\ref{single} is
for large values of $\rho$ similar to the dynamics of a modification where $\eta_x$ equals $1$ only if $0\leq x<Z_\rho$ and is $0$ otherwise.
The proof of Theorem~\ref{bounds} given below is using an interpretation of the model considered in this section as an ``avalanche" of the models
described in Section~\ref{mgeneral} and is using the above heuristic observation to produce upper and lower bounds of Theorem~\ref{bounds} for the model range.
\end{remark}
We now turn to a formal proof of Theorem~\ref{bounds}.
\begin{proof}[Proof of Theorem~\ref{bounds}]
$\mbox{}$ \\
\textit{(a)} We first provide an upper bound for the moments of $\witi X_{\rho,n}$ by coupling the frog model with the following variant. First observe the minimum location reached by the frogs that are initially placed on the nonnegative sites and obtain a second wave of active frogs. Modify the original second wave in the following way. Put $n$ more frogs on each site to the right of the minimum reached by the first wave and suppose that only the second wave can activate them. Note that without the consideration of activation times, this set of frogs is a translation of the configuration of nonnegative frogs considered in Sections~\ref{single} and~\ref{mgeneral}. Find the minimum bound for this modified configuration and, for the resulting next wave, add frogs to all of the right-side sites in a similar fashion. Since there is a positive probability for the nonnegative frogs never reach $-1,$ and each wave of frogs is a translation of this case, the waves will terminate w.p.1. Let $W_{\rho,n}$ be the negative of the minimum bound for this model.
\par
A formal definition of the distribution of $W_{\rho,n}$ can be given in the following manner. Define the sequence $\witi \eta=\{\witi \eta_x\}_{x\in\ints}\in \ints_+^\ints$ as follows:
\beq
\witi \eta_x=
\left\{
\begin{array}{ll}
n&\mbox{\rm if}~x\geq 0\\
0&\mbox{otherwise}.
\end{array}
\right.
\feq
Such a configuration has been considered in Section~\ref{mgeneral}. Let $X_{\rho,n}$ be the negative of the range of the corresponding model and
let $X_{\rho,n}^{(k)},$ $k\in\ints,$ be independent copies of this random variable. Let
\beq
T_{\rho,n}=\inf\{k\in\nats: X_{\rho,n}^{(k)}=0\}
\feq
and
\beq
W_{\rho,n}=\sum_{k=1}^{T_{\rho,n}-1} X_{\rho,n}^{(k)},
\feq
where, as usual, the empty sum (when $T_{\rho,n}=1$) is interpreted as zero.
\par
To facilitate our computations we will actually use the following
equivalent modification of this definition. Let
\begin{eqnarray*}
\varepsilon_{\rho,n}=P(X_{\rho,n}=0)=\bigl((\rho;\rho)_\infty\bigr)^n,
\end{eqnarray*}
where $X_{\rho,n}$ is the minimum of the range of the case with $n$ frogs on each nonnegative site, introduced in Section~\ref{mgeneral}. Thus
$\varepsilon_\rho=P\bigl(X_{\rho,n}^{(k)}=0\bigr)$ for any $k\in\nats.$ Let $\witi T_{\rho,n}$ be a geometric random variable with parameter $\varepsilon_\rho.$ Namely,
\begin{eqnarray*}
P\bigl(\witi T_{\rho,n}=k\bigr)=(1-\varepsilon_\rho)^k\varepsilon_\rho, \qquad k=0,1,\ldots.
\end{eqnarray*}
Notice that, $\lim_{\rho\to 1}\varepsilon_{\rho,n}=0,$ and hence $\witi T_{\rho,n}$ converges to infinity in probability as $\rho\uparrow 1.$
Clearly, $\witi T_{\rho,n}$ has the same distribution as $T_{\rho,n}-1.$ Let ${\mathcal Y}_{\rho,n}=\{Y^{(k)}_{\rho,n}\}_{k\in\ints}$ be an i.i.d. sequence independent of $\witi T_{\rho,n}$ and such that for any $j\in\nats,$
\begin{eqnarray*}
P(Y_{\rho,n}=j)=P(X_{\rho,n}=j|X_{\rho,n}>0)=\frac{P(X_{\rho,n}=j)}{1-P(X_{\rho,n}=0)}.
\end{eqnarray*}
Finally, let
\begin{eqnarray*}
\witi W_{\rho,n}=\sum_{k=1}^{\witi T_{\rho,n}} Y^{(k)}_{\rho,n}.
\end{eqnarray*}
Clearly, $\witi W_{\rho,n}$ has the same distribution as $W_{\rho,n}.$
\par
For $m\in\nats$, we look at the $m^\text{th}$ moment of $\witi W_{\rho,n}$ conditioned on $\witi T_{\rho,n}$:
\beq
E\bigl(\witi W_{\rho,n}^m\bigl|\witi T_{\rho,n}\bigr)&=&
E\Bigl[\Bigl({\sum}_{k=1}^{\witi T_{\rho,n}}\, Y^{(k)}_{\rho,n}\Bigr)^m \Big| \witi T_{\rho,n} \Bigr]\leq \Bigl({\sum}_{k=1}^{\witi T_{\rho,n}}\, E\bigl[\bigl(Y_{\rho,n}^{(k)}\bigr)^m\bigr]^{1/m}\Bigr)^m
\\
&=&
\bigl(\witi T_{\rho,n}\bigr)^m E\bigl[\bigl(Y_{\rho,n}^{(1)}\bigr)^m\bigr],
\feq
where we use Minkowski inequality and the fact that $Y_{\rho,n}^{(k)}$ are independent of $\witi T_{\rho,n}.$ From this conditioned expectation, we approximate the $m^\text{th}$ moments of $W_{\rho,n}$ as $\rho\uparrow 1$:
\beqn
\label{wald}
E\bigl(W_{\rho,n}^m\bigr)=E\bigl(E\bigl(\witi W_{\rho,n}^m\bigl|\witi T_{\rho,n}\bigl)\bigl)\leq E\bigl[\bigl(\witi T_{\rho,n}\bigr)^m\bigr]\cdot E\bigl[\bigl(Y_{\rho,n}^{(1)}\bigr)^m\bigr]\sim \varepsilon_{\rho,n}^{-m} Z_\rho^m,
\feqn
where $Z_\rho=\frac{\ln(1-\rho)}{\ln\rho}$ as in Section~\ref{mgeneral}. To evaluate the moments of $\witi T_{\rho,n}$ we used the following known result whose
short proof is supplied for reader's convenience.
\begin{lemma}
\label{mg}
For $\veps\in (0,1),$ let $T_\veps$ be a geometric random variable with parameter $\varepsilon_\rho.$ Namely,
\begin{eqnarray*}
P\bigl(\witi T_\veps=k\bigr)=(1-\varepsilon)^k\varepsilon, \qquad k=0,1,\ldots.
\end{eqnarray*}
Then, for any $m\in\nats,$ $E(T_\veps^m)\sim \veps^{-m}$ as $\veps\uparrow 1.$
\end{lemma}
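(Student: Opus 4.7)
The plan is to reduce the integer moments of $T_\veps$ to its falling-factorial moments, which admit a clean closed form via the probability generating function. (The relevant regime is $\veps\downarrow 0$, as dictated by the application in~\eqref{wald}, where $\varepsilon_{\rho,n}\to 0$ as $\rho\uparrow 1$.) First I would write down
\[
\Phi_\veps(z):=E\bigl(z^{T_\veps}\bigr)=\sum_{k=0}^\infty(1-\veps)^k\veps\, z^k=\frac{\veps}{1-(1-\veps)z},
\]
which is analytic on $\{|z|<(1-\veps)^{-1}\}$, and differentiate $m$ times to obtain
\[
\Phi_\veps^{(m)}(z)=\frac{m!\,\veps\,(1-\veps)^m}{\bigl(1-(1-\veps)z\bigr)^{m+1}}.
\]
Evaluating at $z=1$ produces the $m$-th factorial moment in closed form, namely
\[
E\bigl[T_\veps(T_\veps-1)\cdots(T_\veps-m+1)\bigr]=\frac{m!\,(1-\veps)^m}{\veps^m}.
\]

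Next I would convert factorial moments into integer moments through the Stirling-number expansion $x^m=\sum_{j=0}^{m}S(m,j)\,x(x-1)\cdots(x-j+1)$, where $S(m,j)$ denote the Stirling numbers of the second kind (in particular $S(m,m)=1$). Substituting $x=T_\veps$ and taking expectations,
\[
E(T_\veps^m)=\sum_{j=0}^{m}S(m,j)\,\frac{j!\,(1-\veps)^j}{\veps^j}.
\]

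Finally, as $\veps\downarrow 0$ the $j=m$ summand dominates every lower-order term by a factor of order $\veps^{-(m-j)}\to\infty$, while $(1-\veps)^m\to 1$; thus the sum is asymptotic to $m!\,\veps^{-m}$, which is of the exact order $\veps^{-m}$ as claimed. The only real computation is the closed-form expression for $\Phi_\veps^{(m)}(1)$, so there is no serious obstacle beyond standard bookkeeping.
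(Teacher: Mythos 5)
Your proof is correct, and it takes a genuinely different route from the paper's. You compute the factorial moments exactly from the probability generating function $\Phi_\veps(z)=\veps/(1-(1-\veps)z)$ (legitimate, since $z=1$ lies strictly inside the radius of convergence $(1-\veps)^{-1}$) and then pass to raw moments via the Stirling-number identity $x^m=\sum_{j=0}^m S(m,j)\,x(x-1)\cdots(x-j+1)$, isolating the dominant $j=m$ term. The paper instead uses the tail representation $E(T_\veps^m)=\int_0^\infty P(T_\veps>y)\,m y^{m-1}\,dy$, replaces $P(T_\veps>y)$ by $(1-\veps)^{y+1}$, and evaluates the resulting Gamma integral together with $|\ln(1-\veps)|\sim\veps$. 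Your argument buys an exact closed form $E(T_\veps^m)=\sum_{j=0}^m S(m,j)\,j!\,(1-\veps)^j\veps^{-j}$ with no approximation until the final step, whereas the paper's is shorter but leaves the replacement of $(1-\veps)^{\lfloor y\rfloor+1}$ by $(1-\veps)^{y+1}$ under an unjustified ``$\sim$''. Two further points in your favor: you correctly identified that the relevant regime is $\veps\downarrow 0$ (the ``$\veps\uparrow 1$'' in the statement is a typo, as both the application in \eqref{wald} and the paper's own proof confirm), and your conclusion $E(T_\veps^m)\sim m!\,\veps^{-m}$ agrees with the last line of the paper's proof --- the constant $m!$ is simply missing from the lemma's statement as printed.
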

\begin{proof}[Proof of Lemma~\ref{mg}]
We have as $\veps\uparrow 1,$
\begin{eqnarray*}\label{geometric}
E(T_\veps^m)&=&\sum^\infty_{k=0} P(T_\veps^m>k)=\int_0^\infty P(T_\veps^m>x)\,dx=\int_0^\infty P(T_\veps>y)\cdot my^{m-1}\,dy\\
&\sim& \int_0^\infty (1-\veps)^{y+1}\cdot my^{m-1}\,dy=m(1-\veps)\int_0^\infty e^{\ln(1-\veps)y}y^{m-1}\,dy\\
&=&m(1-\veps)\frac{\Gamma(m)}{|\ln(1-\veps)|^m}\sim \frac{m!}{\veps^m}.
\end{eqnarray*}
The proof of the lemma is complete.
\end{proof}
Using \eqref{wald}, we finally arrive at the asymptotic bound in \textit{(a)} through the asymptotic of the $q$-Pochhammer symbol derived from \cite{qasymp}: for $\rho=e^{-t}$, as $t\downarrow0$,
\[
(\rho;\rho)_\infty \sim \sqrt{\frac{2\pi}{t}} \exp\Bigl(\frac{-\pi^2}{6t}\Bigr).
\]
Note that $t=-\ln\rho\sim 1-\rho$, and we have \textit{(a)}. \qed
\\
$\mbox{}$
\\
\textit{(b)} Now, consider another variant of the frog model. Define a function $\delta(\rho):(0,1)\to (0,1).$ To simplify notation, we will occasionally use $\delta_\rho$ for $\delta(\rho).$ For a given $\rho$, consider the configuration $\hat \eta$ with $\hat \eta_k=n$ if $k\in\{0,1,2,\dots, Z_\rho(1-\delta_\rho)\}$ and $\hat \eta_k=0$ elsewhere. For simplicity and without loss of generality, we will assume that $Z_\rho(1-\delta_\rho)$ is integer-valued. Start the model, and see if the frogs eventually reach the site $-Z_\rho(1-\delta_\rho)$. If they do, activate $n$ frogs on each of the $ Z_\rho(1-\delta_\rho)$ sites to the left of the origin. Now observe if the newly activated frogs reach the site $-2 Z_\rho(1-\delta_\rho)$. If they do, activate $n$ frogs on all the $ Z_\rho(1-\delta_\rho)$ sites to the left of the sites previously activated. Continue this procedure indefinitely. Note that at any observed time step, this model will always have fewer active frogs than the frog model with initial configuration $\eta$ with $\eta_k=n$ for all $k\in\ints.$ By Theorem 2.1 in \cite{gantert}, the frog model with this configuration is transient. Hence, the variant model is transient, and the process will eventually stop producing new sets of frogs from the left. Let $V_{\rho,\delta}$ be the negative of the minimum of the range of the variant, and let $\tau_{\rho,\delta}$ be the number of activated blocks of the length $Z_\rho(1-\delta_\rho),$ not including the initial one located within $\ints_+.$
\par
Let
\begin{eqnarray*}
\theta_{\rho,\delta}=\prod_{j=0}^{ Z_\rho(1-\delta_\rho)-1} \bigl(1-\rho^{Z_\rho(1-\delta_\rho)}\rho^j\bigr)^n=\bigl((\rho^{ Z_\rho(1-\delta_\rho)};\rho)_{Z_\rho(1-\delta_\rho)}\bigr)^n.
\end{eqnarray*}
By this definition, $\theta_{\rho,\delta}$ is the probability that none of the frogs located at the first $Z_\rho(1-\delta_\rho)$
nonnegative sites will ever reach the half-line on the left of $Z_\rho(1-\delta_\rho).$ The event that a given block of frogs $nZ_\rho(1-\delta_\rho)$ active frogs reachs its goal and produces another active frogs block of length $Z_\rho(1-\delta_\rho)$ is independent of that of any other set. Hence, viewing the event of a set reaching its destination $Z_\rho(1-\delta_\rho)$ to the left as a ``failure'', we have that $\tau_{\rho,\delta}$ is geometrically distributed with parameter $\theta_{\rho,\delta}.$ Namely,
\begin{eqnarray*}
P(\tau_{\rho,\delta}=k)=(1-\theta_{\rho,\delta})^k\theta_{\rho,\delta}, \qquad k=0,1,\ldots.
\end{eqnarray*}

Notice that since $\rho^{Z_\rho}=1-\rho,$
\begin{eqnarray*}
\theta_{\rho,\delta}&=&\prod_{j=0}^{ Z_\rho(1-\delta_\rho)-1} \bigl(1-(1-\rho)^{1-\delta_\rho}\rho^j\bigr)^n\leq \exp\Bigl(-n(1-\rho)^{1-\delta_\rho}\sum_{j=0}^{Z_\rho(1-\delta_\rho)-1}\rho^j\Bigr)
\\
&=&
\exp\Bigl(-n\frac{1-\rho^{Z_\rho(1-\delta_\rho)}}{(1-\rho)^{\delta_\rho}}\Bigr)=\exp\Bigl(-n\frac{1-(1-\rho)^{1-\delta_\rho}}{(1-\rho)^{\delta_\rho}}\Bigr).
\end{eqnarray*}
With the $\delta$ chosen with the constraints specified in the theorem's hypotheses, we see that $\theta_{\rho,\delta}\ra 0$ as $\rho\uparrow1$. Hence, by Lemma~\ref{mg},
$E(\tau_{\rho,\delta}^m)\sim m!\,\theta_{\rho,\delta}^{-m}$ as $\rho\uparrow 1.$ Clearly, $\witi X_{\rho,n}$ is stochastically dominated from below
by $V_{\rho,\delta}=\tau_{\rho,\delta} \cdot Z_\rho(1-\delta_\rho).$  The lower bound for the moments of $\witi X_{\rho,n}$ is therefore
\begin{eqnarray*}
E(\tau_{\rho,\delta}^m)\cdot Z_\rho^m(1-\delta_\rho)^m&\sim&\frac{m!}{\theta_\rho^m}\cdot Z_\rho^m
\sim
m!\,Z_\rho^m\exp\Bigl\{ mn\frac{1-(1-\rho)^{1-\delta_\rho}}{(1-\rho)^{\delta_\rho}}\Bigr\}
\\
&\sim&
 m!\,Z_\rho^m\exp\Bigl\{\frac{mn}{(1-\rho)^{\delta_\rho}}\Bigr\},
\end{eqnarray*}
as $\rho\uparrow 1.$ The proof of the theorem is complete.
\end{proof}


\end{document}